\def\version{2 December  2011}

\documentclass[reqno,11pt]{amsart}

\usepackage{amssymb,epsf,color}
\usepackage{rotating}

\definecolor{gray}{rgb}{0.9,0.9,0.9}

\def\be{\begin{equation}}
\def\ee{\end{equation}}
\def\ba{\begin{align}}
\def\ea{\end{align}}
\def\bsplit{\begin{split}}
\def\esplit{\end{split}}
\def\bm{\begin{multline}}
\def\eem{\end{mutline}}
\def\bfig{\begin{figure}[htb]}
\def\efig{\end{figure}}

\setlength{\oddsidemargin}{0mm}
\setlength{\evensidemargin}{0mm}
\setlength{\textwidth}{150mm}
\setlength{\topmargin}{0mm}
\setlength{\textheight}{220mm}
\setcounter{secnumdepth}{2}
\numberwithin{equation}{section}
\newtheorem{theorem}{Theorem}[section]
\newtheorem{proposition}[theorem]{Proposition}
\newtheorem{lemma}[theorem]{Lemma}
\newtheorem{corollary}[theorem]{Corollary}


\newcommand{\nn}{\nonumber}

\renewcommand{\leq}{\;\leqslant\;}
\renewcommand{\geq}{\;\geqslant\;}
\newcommand{\dd}{{\rm d}}
\newcommand{\e}[1]{\,{\rm e}^{#1}\,}
\newcommand{\ii}{{\rm i}}
\newcommand{\sumtwo}[2]{\sum_{\substack{#1 \\ #2}}}

\def\Re{{\operatorname{Re\,}}}
\def\Im{{\operatorname{Im\,}}}

\newcommand{\upchi}{\raise 2pt \hbox{$\chi$}}



\newcommand{\caS}{{\mathcal S}}

\newcommand{\bbE}{{\mathbb E}}\newcommand{\bbN}{{\mathbb N}}\newcommand{\bbP}{{\mathbb P}}


\begin{document}

{\hfill\small \version} \vspace{2mm}

\title[Cycle structure of random permutations with cycle weights]{Cycle structure of random permutations \\ with cycle weights}

\author{Nicholas M.\ Ercolani, Daniel Ueltschi}

\address{Nicholas M.\ Ercolani \hfill\newline
\indent Department of Mathematics, \hfill\newline
\indent The University of Arizona \hfill\newline
\indent 617 N.\ Santa Rita Ave. \hfill\newline
\indent P.O.\ Box 210089 \hfill\newline
\indent Tucson, AZ 85721--0089 USA \hfill\newline
{\small\rm\indent http://math.arizona.edu/~ercolani/}
}
\email{ercolani@math.arizona.edu}

\address{Daniel Ueltschi \hfill\newline
\indent Department of Mathematics \hfill\newline
\indent University of Warwick \hfill\newline
\indent Coventry, CV4 7AL, United Kingdom \hfill\newline
{\small\rm\indent http://www.ueltschi.org}
}
\email{daniel@ueltschi.org}

\begin{abstract}
We investigate the typical cycle lengths, the total number of cycles, and the number of finite cycles in random permutations whose probability involves cycle weights. Typical cycle lengths and total number of cycles depend strongly on the parameters, while the distributions of finite cycles are usually independent Poisson random variables.
\end{abstract}

\maketitle

\noindent
{\footnotesize {\it Keywords:} Random permutations, Ewens distribution, cycle weights, cycle structure.}

\noindent
{\footnotesize {\it 2010 Math.\ Subj.\ Class.:} 60C05.}

\tableofcontents

\section{Introduction}

Weighted random partitions and random permutations appear in mathematical biology and in theoretical physics. They are appealing because of their natural probabilistic structure and their combinatorial flavor. The sample space for random permutations is the set $\caS_n$ of permutations of $n$ elements. Given $\sigma \in \caS_n$, we let $R_j(\sigma)$ denote the number of cycles of length $j$ in $\sigma$. The probability of a permutation $\sigma$ is then defined as
\be
\label{basic prob}
\bbP_n(\sigma) = \frac1{n! h_n} \prod_{j\geq1} \theta_j^{R_j(\sigma)}.
\ee
Here, $\theta_1, \theta_2, \dots$ are nonnegative parameters and $h_n$ is the normalization that makes $\bbP_n$ a probability distribution, i.e.,
\be
h_n = \frac1{n!} \sum_{\sigma \in \caS_n} \prod_{j\geq1} \theta_j^{R_j(\sigma)}.
\ee
Notice that $R_1,R_2,\dots$ satisfy the following identity for all $\sigma \in \caS_n$:
\be
\label{sum rule}
\sum_{j=1}^n j R_j(\sigma) = n.
\ee

The case $\theta_j \equiv 1$ corresponds to random permutations with uniform distribution and it has been studied e.g.\ in \cite{DP,AT,BP}. The case $\theta_j \equiv \theta$ is known as the Ewens distribution and it was introduced for the study of population dynamics in mathematical biology \cite{Ewe}. See \cite{ABT,Han,FH} and references therein. Random permutations with restriction on the cycle lengths can be described with parameters $\theta_j \in \{0,1\}$. Results have been obtained for random permutations restricted to finite cycles \cite{Tim,Ben} or to cycle lengths of given parity \cite{Lugo}. Another situation of interest is when we fix the asymptotic behavior of $\theta_j$ for large $j$. Such a setting was considered in \cite{BG}, and it also appears in the study of the quantum Bose gas in statistical mechanics \cite{BU1,BU2}. The case where $\theta_j$ converges to a constant (i.e., the Ewens case, asymptotically) was considered in \cite{Lugo,BUV}. Vanishing parameters $\theta_j \to 0$ were studied in \cite{BG,BUV} where logarithmic cycle lengths were observed.

The random variables to be discussed in this article are the following:
\begin{itemize}
\item $L_1(\sigma)$ gives the length of the cycle that contains the index 1. Since our probability distribution is invariant under relabeling, we can interpret $L_1$ as giving the length of a ``typical cycle'', i.e.\ the length of the cycle that contains a random index.
\item $K(\sigma) = \sum_{j=1}^n R_j(\sigma)$ gives the total number of cycles in the permutation $\sigma$.
\item We also consider the distributions of $R_1,R_2,\dots$ as $n\to\infty$, i.e., the number of finite cycles. In most cases, they will be seen to converge to independent Poisson random variables.
\end{itemize}

The probability distribution $\bbP_n$ depends only on the cycle structure $(R_1(\sigma),\dots,R_n(\sigma))$ of $\sigma \in \caS_n$. It can therefore be understood as a distribution on nonnegative integers $(r_1,\dots,r_n)$ that satisfy $\sum j r_j = n$. As is well-known, these numbers are in one-to-one correspondence with integer partitions $(\lambda_1,\lambda_2,\dots)$ with $\lambda_1 \geq \lambda_2 \geq \dots$ and $\sum \lambda_i = n$, by defining $r_j = \#\{ i: \lambda_i = j \}$. Taking into account the number of partitions that correspond to a given set $(r_1,\dots,r_n)$, the probability of a weighted random partition is then
\be
\bbP_n(\lambda_1, \lambda_2, \dots) = \frac1{h_n} \prod_{j=1}^n \frac1{r_j!} \Bigl( \frac{\theta_j}j \Bigr)^{r_j}.
\ee
A random variable such as $L_1$ cannot be expressed in terms of $\{R_j\}$, but one can give an interpretation of its distribution in the context of random partitions. Given a random partition, pick $i \in \{1,\dots,n\}$ uniformly at random, and take $j$ that satisfies
\be
\lambda_1 + \dots + \lambda_{j-1} < i \leq \lambda_1 + \dots + \lambda_j.
\ee
The distribution of $\lambda_j$ is identical to that of $L_1$.

\begin{sidewaystable}
\centering
\vspace{16cm}
\colorbox{gray}{
\begin{tabular}{lcccc}
Parameters & Typical cycle lengths & Number of cycles & Finite cycles & Reference \\
\hline
$\theta_n = \e{n^\gamma}$, $\gamma>1$ & $\bbP_n(L_1 = n) \to 1$ & $K \Rightarrow 1$ & $R_j \Rightarrow 0$ & Section \ref{sec regime A} \\
\hline
$\theta_n \approx \e{n^\gamma}$, $0<\gamma<1$ & $L_1 / (\frac1{1-\gamma} \log n)^{1/\gamma} \Rightarrow 1$ &  & $R_j \Rightarrow \text{Poisson} \bigl( \frac{\theta_j}j \bigr)$ & Section \ref{sec regime B} \\
\hline
$\theta_n \approx n^\gamma$, $\gamma>0$ & \hspace{-3mm} $L_1 / n^{\frac1{\gamma \! + \! 1}} \Rightarrow \text{Gamma} \bigl( \gamma \! + \! 1, \Gamma(\gamma \! + \! 1)^{\frac1{\gamma \! + \! 1}} \bigr)$ & $\bbE_n(K) \approx A n^{\frac\gamma{\gamma+1}}$  & $R_j \Rightarrow \text{Poisson} \bigl( \frac{\theta_j}j \bigr)$ & Section \ref{sec regime C} \\
\hline
$\theta_n \to \theta$ & $L_1/n \Rightarrow \text{Beta}(1,\theta)$ & $\bbE_n(K) \approx \theta \log n$ & $R_j \Rightarrow \text{Poisson} \bigl( \frac{\theta_j}j \bigr)$ & Section \ref{sec regime D} \\
\hline
$\substack{\displaystyle \theta_n = n^{-\gamma}, \gamma>0, \text{ or} \\ \displaystyle \theta_n = \e{-n^\gamma}, 0<\gamma<1}$ & $L_1/n \Rightarrow 1$ & \hspace{-3mm} $K \Rightarrow 1 + \text{Poisson}(\sum \frac{\theta_j}j)$ & $R_j \Rightarrow \text{Poisson} \bigl( \frac{\theta_j}j \bigr)$ & Section \ref{sec regime E} \\
\hline
$\theta_n = \e{-n^\gamma}$, $\gamma>1$ & $L_1 / \bigl( \frac1{\gamma-1} \log n \bigr)^{1/\gamma} \Rightarrow 1$ &  & \hspace{-3mm} $\bbE_n(R_j) \sim \exp\bigl\{ j \gamma \bigl( \frac{\log n}{\gamma-1} \bigr)^{\frac{\gamma-1}\gamma} \bigr\}$ & Section \ref{sec regime F} \\
\end{tabular}
}
\medskip
\caption{Overview of typical cycle lengths ($L_1$), total number of cycles ($K$), and number of finite cycles ($R_j$), for different parameters. As for the notation: `$\Rightarrow$' denotes convergence in distribution; `$a_n \approx b_n$' means that $a_n/b_n \to 1$ as $n\to\infty$; `$a_n \sim b_n$' is a much weaker relation.}
\label{table}
\end{sidewaystable}

Let us discuss the heuristics behind the dependence of cycle lengths on parameters. It is tempting to think that, if $\theta_j$ is increasing, longer cycles are favored. But this turns out to be incorrect. For instance, typical cycle lengths are of order $n$ for $\theta_j \equiv 1$; they are of order $(\log n)^{1/\gamma}$ for $\theta_j = \e{j^\gamma}$ with $0<\gamma<1$; they are again of order $n$ for $\theta_j = \e{j}$.
The reason for this apparently erratic behavior is best understood from the perspective of statistical mechanics, as already mentioned in \cite{BUV}. We can assign to each index a weight that depends on the length of the cycle it belongs to. Let $L_i(\sigma)$ denote the length of the cycle that contains $i$, and notice the identity
\be
\sum_{i=1}^n a_{L_i(\sigma)} = \sum_{j=1}^n j a_j R_j(\sigma),
\ee
that holds for any $\sigma$ and any numbers $a_1,a_2,\dots$. Choosing $a_j = \frac1j \log\theta_j$, the probability distribution $\bbP_n$ can be rewritten in the form of a ``Gibbs state", namely
\be
\label{stat mech}
\bbP_n(\sigma) = \frac1{n! h_n} \exp \Bigl( \sum_{i=1}^n a_{L_i(\sigma)} \Bigr).
\ee
The weight $a_{L_i(\sigma)}$ plays the r\^ole of the negative of the energy.
The heuristics become
\begin{itemize}
\item If $a_j = \frac1j \log\theta_j$ is increasing, indices prefer to be in longer cycles, and typical cycle lengths are longer.
\item If $a_j = \frac1j \log\theta_j$ is decreasing, the converse happens.
\end{itemize}
Expression \eqref{stat mech} also points to an important symmetry of the parameters: Adding a constant $c$ to each $a_j$ does not affect $\bbP_n$. But it amounts to changing the parameters from $\theta_j$ to $\theta_j \e{cj}$. This also shows that purely exponential parameters are equivalent to uniform random permutations.

We have summarized the results about weighted random permutations in Table \ref{table}. Precise claims can be found in Sections \ref{sec regime A}--\ref{sec regime F}. The behavior of $L_1$ is strikingly similar when the parameters grow sub-exponentially or decay super-exponentially. Notice that typical cycle lengths were obtained earlier in the asymptotic Ewens case \cite{Lugo,BUV} and when $\theta_j$ decreases to 0 \cite{BUV}. We complement these results with statements about finite cycles and total number of cycles (see also \cite{BG}). We also show that the joint distribution of $L_1,L_2,\dots$ converges to the Poisson-Dirichlet distribution.

\section{Generalities}

\subsection{Generating functions and basic expressions for random variables}

Let us start with the exponential generating function of weighted cyclic permutations. There are $(n-1)!$ cyclic permutations of $n$ elements, and therefore
\be
\sum_{n\geq1} \sumtwo{\sigma \in \caS_n}{\text{cyclic}} \frac1{n!} \theta_n z^n = \sum_{n\geq1} \frac{\theta_n}n z^n.
\ee
A general permutation can be viewed as a combinatorial set of cycles, so that the generating function of permutations is given by the exponential of the generating function of cyclic permutations (see Corollary 6.6 of \cite{Aigner}). We set $h_0 = 1$. Then
\be
\label{gen fct}
G_h(z) = \sum_{n\geq0} h_n z^n = \sum_{n\geq0} \frac{z^n}{n!} \sum_{\sigma\in\caS_n} \prod_{j\geq1} \theta_j^{R_j} = \exp \sum_{n\geq1} \frac{\theta_n}n z^n.
\ee
We now obtain expressions that characterize the random variables $L_1,K,R_j$. For part (c), we use $r_{[k]}$ to denote the descending factorial,
\be
r_{[k]} = r (r-1) \dots (r-k+1).
\ee

\begin{proposition}\hfill
\label{prop rvs}
\begin{itemize}
\item[(a)] $\displaystyle \bbP_n(L_1 = j) = \frac{\theta_j h_{n-j}}{n h_n}$.
\item[(b)] $\displaystyle \bbE_n(K) = \sum_{j=1}^n \frac{\theta_j h_{n-j}}{j h_n}$.
\item[(c)] $\displaystyle \bbE_n \Bigl( \prod_{j\geq1} (R_j)_{[k_j]} \Bigr) = \frac{h_{n - \sum_j j k_j}}{h_n} \prod_{j\geq1} \Bigl( \frac{\theta_j}j \Bigr)^{k_j}$ for all integers $k_1, \dots, k_n$ such that $\sum j k_j \leq n$.
\end{itemize}
\end{proposition}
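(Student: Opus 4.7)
Part (a) is a direct cycle count. Summing the weights of all $\sigma \in \caS_n$ with $L_1(\sigma) = j$ can be done by first choosing the $j-1$ partners of the index $1$ in its cycle, in $\binom{n-1}{j-1}$ ways, arranging them in cyclic order in $(j-1)!$ ways, and then placing an arbitrary weighted permutation on the remaining $n-j$ elements. The cycle containing $1$ contributes a factor $\theta_j$, while the remainder contributes the weighted sum $(n-j)! \, h_{n-j}$. Dividing by $n!\,h_n$ and simplifying the binomial and factorial factors yields the stated expression.

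For (c), I would interpret the descending factorial combinatorially: $\prod_j (R_j(\sigma))_{[k_j]}$ counts the number of ways of picking, for each $j$, an ordered tuple of $k_j$ distinct cycles of length $j$ in $\sigma$. Swapping the sum over $\sigma$ with the sum over such distinguished configurations reduces the computation to a pair (distinguished configuration, permutation of the remaining elements). Writing $m = \sum_j j k_j$, the number of distinguished configurations weighted by the $\theta_j$ of the selected cycles is
\be
\frac{n!}{(n-m)!} \prod_{j\geq1} \Bigl( \frac{\theta_j}{j} \Bigr)^{k_j},
\ee
where $n!/(n-m)!$ counts ordered placements of $m$ labels into the cycle slots, and the factor $\prod_j j^{-k_j}$ accounts for the cyclic symmetry within each selected cycle. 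The complementary permutation on the remaining $n-m$ elements contributes a weighted sum $(n-m)!\, h_{n-m}$. Multiplying and dividing by $n!\,h_n$ yields the formula in (c).

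For (b), the cleanest route is via the identity $K(\sigma) = \sum_{i=1}^n L_i(\sigma)^{-1}$, since every cycle of length $j$ contributes $j$ indices with weight $1/j$. Because $\bbP_n$ is invariant under relabeling of $\{1,\dots,n\}$, each $L_i$ has the same distribution as $L_1$, and therefore
\be
\bbE_n(K) = n \, \bbE_n(L_1^{-1}) = n \sum_{j=1}^n \frac{1}{j} \bbP_n(L_1 = j),
\ee
which combined with (a) gives (b). Alternatively, (b) can be read off from (c) by specializing to $k_j = 1$ for one fixed $j$ and zero otherwise, giving $\bbE_n(R_j) = \frac{\theta_j h_{n-j}}{j h_n}$, and then summing over $j$.

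The only genuinely nontrivial step is the combinatorial interpretation and enumeration in (c); the bookkeeping for the cyclic symmetry factors $j^{-k_j}$ and the separation of the distinguished part from the residual permutation is where errors are most likely to creep in. Once (c) is in hand, parts (a) and (b) follow immediately (indeed (a) is the case $k_j = 1$, all other $k_i = 0$, $j$ summed against $\bbE_n(R_j \cdot j/n)$, or more cleanly by the direct enumeration described above).
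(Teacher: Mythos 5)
Your proof is correct, and for part (c) it takes a genuinely different route from the paper. The paper proves (c) analytically: it writes $\prod_j (R_j)_{[k_j]}$ as the action of the operators $\theta_j^{k_j}\,\dd^{k_j}/\dd\theta_j^{k_j}$ on $h_n$, passes to the generating function $G_h(s)=\exp\sum_n \theta_n s^n/n$, and reads off that each such operator multiplies $G_h$ by $(\theta_j s^j/j)^{k_j}$, which shifts the coefficient extraction from $[s^n]$ to $[s^{n-\sum j k_j}]$. You instead interpret the descending factorials as counting ordered tuples of distinguished disjoint cycles and enumerate directly; your count $\frac{n!}{(n-m)!}\prod_j (\theta_j/j)^{k_j}$ for the weighted distinguished configurations (with the $j^{-k_j}$ accounting for rotations within each cycle) times $(n-m)!\,h_{n-m}$ for the residual permutation is exactly right, and dividing by $n!\,h_n$ gives the claim. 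The combinatorial route is more elementary and makes the factorization into ``marked cycles plus remainder'' transparent; the paper's route is shorter once the generating function \eqref{gen fct} is in hand and generalizes mechanically to other factorial moments. Your part (a) coincides with the paper's enumeration ($\binom{n-1}{j-1}(j-1)!=(n-1)\cdots(n-j+1)$), and for (b) your identity $K=\sum_{i=1}^n L_i^{-1}$ combined with exchangeability is a clean alternative to the paper's derivation of $\bbE_n(R_j)=\theta_j h_{n-j}/(j h_n)$ from (c), which you also note.
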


Before proving this proposition, let us mention two useful consequences.
Summing over all possible values for $j$ in the identity (a), we get a relation for the $h_n$s, namely
\be
\label{useful h_n}
h_n = \frac1n \sum_{j=1}^n \theta_j h_{n-j}.
\ee
The following corollary will apply to all regimes of parameters that we consider, except super-exponential growth or decay.

\begin{corollary}
\label{cor Poisson}
When $h_{n-1}/h_n \to 1$ as $n\to\infty$, the joint distribution of the number of finite cycles, $R_1, R_2, R_3, \dots$, converges weakly to independent Poisson with means $\theta_1, \frac{\theta_2}2, \frac{\theta_3}3, \dots$.
\end{corollary}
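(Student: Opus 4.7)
The plan is to deduce the corollary from part (c) of Proposition \ref{prop rvs} by the method of factorial moments, exploiting the fact that independent Poisson variables are uniquely determined by their joint factorial moments.

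First, fix any $m \geq 1$ and nonnegative integers $k_1, \dots, k_m$, and set $s = \sum_{j=1}^m j k_j$. For all $n \geq s$, Proposition \ref{prop rvs}(c) gives
\be
\bbE_n\Bigl( \prod_{j=1}^m (R_j)_{[k_j]} \Bigr) = \frac{h_{n-s}}{h_n} \prod_{j=1}^m \Bigl( \frac{\theta_j}{j} \Bigr)^{k_j}.
\ee
The hypothesis $h_{n-1}/h_n \to 1$ gives, by telescoping,
\be
\frac{h_{n-s}}{h_n} = \prod_{i=0}^{s-1} \frac{h_{n-i-1}}{h_{n-i}} \longrightarrow 1,
\ee
since $s$ is fixed and each factor tends to $1$. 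Therefore
\be
\lim_{n\to\infty} \bbE_n\Bigl( \prod_{j=1}^m (R_j)_{[k_j]} \Bigr) = \prod_{j=1}^m \Bigl( \frac{\theta_j}{j} \Bigr)^{k_j}.
\ee

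Next I would observe that the right-hand side is precisely the joint factorial moment of independent Poisson variables $X_1, X_2, \dots$ with means $\theta_1, \theta_2/2, \theta_3/3, \dots$, since $\bbE(X_j^{[k_j]}) = (\theta_j/j)^{k_j}$ and independence makes the expectation of the product equal to the product of expectations. Because Poisson distributions are determined by their moments (their moment generating functions have infinite radius of convergence), convergence of all joint factorial moments implies convergence in distribution of $(R_1,\dots,R_m)$ to $(X_1,\dots,X_m)$ for every fixed $m$. Convergence in the product topology of the full sequence $(R_1,R_2,\dots)$ then follows because convergence of all finite-dimensional marginals suffices.

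The argument is essentially automatic once Proposition \ref{prop rvs}(c) is in hand, so there is no real obstacle: the only nontrivial input is the hypothesis $h_{n-1}/h_n \to 1$, which supplies the telescoping estimate, and the standard fact that Poisson laws are moment-determined. If desired one could present the method of moments step more concretely by recalling that $\bbE(X^k) = \sum_{\ell=0}^k S(k,\ell) \bbE(X^{[\ell]})$ with Stirling numbers $S(k,\ell)$, so that ordinary joint moments also converge, and then invoke Carleman's condition to upgrade moment convergence to weak convergence.
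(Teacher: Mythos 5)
Your proof is correct and follows essentially the same route as the paper: apply Proposition \ref{prop rvs}(c), use the hypothesis $h_{n-1}/h_n \to 1$ (via telescoping) to see that the joint factorial moments converge to those of independent Poisson variables, and conclude by the standard method of factorial moments. The paper simply cites this last step (Lemma 2.8 of \cite{Worm}) rather than spelling out the moment-determinacy argument as you do.
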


\begin{proof}
We have from Proposition \ref{prop rvs} (c) that
\be
\lim_{n\to\infty} \bbE_n \Bigl( \prod_{j\geq1} (R_j)_{[k_j]} \Bigr) = \prod_{j\geq1} \Bigl( \frac{\theta_j}j \Bigr)^{k_j}
\ee
for all $k_1,k_2,\dots$ with finitely many nonzero terms. The result is then standard, see e.g.\  Lemma 2.8 of \cite{Worm}.
\end{proof}

\begin{proof}[Proof of Proposition \ref{prop rvs}]
The sum over permutations with $L_1=j$ can be done by first summing over the $(j-1)$ other indices that belong to the cycle that contains 1 (there are $(n-1) \dots (n-j+1)$ possibilities), then by summing over permutations of the remaining $(n-j)$ indices. We get
\be
\bbP_n(L_1=j) = \frac1{n! h_n} (n-1) \dots (n-j+1) \theta_j (n-j)! h_{n-j} = \frac{\theta_j h_{n-j}}{n h_n}.
\ee

We now prove the identity (c). We use the generating function $G_h(s)$.
Let $k_1,k_2,\dots$ be nonnegative integers such that $\sum_j j k_j \leq n$.  Recall that $h_n$ and $G_h$ depend on $\theta_1,\theta_2,\dots$ Using $h_n = [s^n] G_h(s)$, we have
\be
\begin{split}
\bbE_n \Bigl( \prod_{j\geq1} (R_j)_{[k_j]} \Bigr) &= \frac1{h_n} \biggl( \prod_{j\geq1} \theta_j^{k_j} \frac{\dd^{k_j}}{\dd \theta_j^{k_j}} \biggr) h_n \\
&= \frac1{h_n} [s^n] \biggl( \prod_{j\geq1} \theta_j^{k_j} \frac{\dd^{k_j}}{\dd \theta_j^{k_j}} \biggr) G_h(s) \\
&= \frac1{h_n} \prod_{j\geq1} \Bigl( \frac{\theta_j}j \Bigr)^{k_j} [s^{n - \sum_j j k_j}] G_h(s) \\
&= \frac{h_{n - \sum_j j k_j}}{h_n} \prod_{j\geq1} \Bigl( \frac{\theta_j}j \Bigr)^{k_j}.
\end{split}
\ee

As a special case of (c) we have $\bbE_n(R_j) = \frac{\theta_j h_{n-j}}{j h_n}$, which yields the expression (b) for the expected number of cycles.
\end{proof}

\subsection{Saddle point analysis}

Several asymptotic results will be derived using the method of steepest descent, which prompts us to introduce it here. We refer the reader unfamiliar with this method to \cite{Miller} for appropriate background. For our particular application we will require a uniform extension of this method to a family of descent problems essentially indexed by $n$. This extension may more generally be referred to as {\it saddle point analysis}. What this entails and how it is justified will be discussed in the final section. We state here the main result based on this method and how it will apply to the cases of interest in this paper. More details can be found in Section \ref{sec uniform SPE}.

Let $G(z)$ be a function that is analytic at the origin with Taylor series there having a finite, non-zero radius of convergence (which we will take to be $1$ in all cases).  For $r>0$, let
\be
\begin{split}
&\alpha(r) = r (\log G(r))', \\
&\beta(r) = r \alpha'(r)
\end{split}
\ee
and assume that
$$\lim_{r \to 1} \alpha(r) = \infty; \,\,\, \lim_{r \to 1} \beta(r) = \infty.$$
Assume further that for $n$ sufficiently large there is an $\epsilon$ such that on  $(1 - \epsilon,1)$ there is a unique solution to the equation 
\be
\alpha(r)  = n
\ee
and denote this root by  $r_n$. Notice that  in this range $r_n$ is increasing and $\lim_{n\to\infty} r_n = 1$. Then the main saddle point result we use states that 
\be
\label{general saddle asymptotic}
[z^n] G(z) = \frac{G(r_n)}{r_n^n \sqrt{2\pi \beta(r_n)}} \bigl( 1 + o(1) \bigr).
\ee

We will apply this in two cases: (i) for the generating function $G_\theta$ of the parameters; (ii) for the generating function $G_h$ of the coefficients $h_n$. 

It is convenient to introduce
\be
I_\mu(z) = \sum_{n\geq1} n^\mu \theta_n z^n.
\ee
These functions satisfy the following recursion relations
\be
\label{recursion relations}
I_\mu(z) = z \, I_{\mu-1}'(z).
\ee

In the case of the parameter generating functions we take
\be
G_\theta(z) = z^{-1}I_0(z) = \frac{d}{dz} I_{-1}(z).
\ee
One easily finds that 
\be
\alpha(z) = \frac{I_1(z)}{I_0(z)} -1.
\ee
Define $\rho_n$ by the equation $\alpha(\rho_n) = n$. Then, applying (\ref{general saddle asymptotic}), the parameters are asymptotically equal to
\be
\label{asymptotic theta}
\theta_{n+1} = \frac{G_\theta(\rho_n)}{\rho_n^{n + \frac12} \sqrt{2\pi \alpha'(\rho_n)}} \bigl( 1 + o(1) \bigr).
\ee

Let us turn to the second case, i.e., the generating function of $h_n$ given by \eqref{gen fct}.
We have the relations
\be
\label{Gh}
\begin{split}
&G_h(z) = \exp I_{-1}(z), \\
&\alpha(r) = I_0(r), \quad I_0(r_n) = n, \\
&\beta(r) = I_1(r).
\end{split}
\ee
Notice that $r_n$ is increasing in $n$ and that $\lim r_n$ is equal to the radius of convergence of $I_0$.
The coefficients $h_n$ are asymptotically equal to
\be
\label{asymptotic h}
h_n = \frac{\e{I_{-1}(r_n)}}{r_n^n \sqrt{2\pi I_1(r_n)}} \bigl( 1 + o(1) \bigr).
\ee
We will actually deal with ratios of those numbers, and the following bounds will prove extremely useful.

\begin{proposition}
\label{prop great bounds}
Assume that $\theta_1,\theta_2,\dots$ are such that the saddle point approximation involving $G_h$ is valid. Then
\[
\sqrt{\frac{I_1(r_n)}{I_1(r_{n-j})}} r_{n-j}^j \leq \frac{h_{n-j}}{h_n} \bigl( 1 + o(1) \bigr) \leq \sqrt{\frac{I_1(r_n)}{I_1(r_{n-j})}} r_n^j.
\]
\end{proposition}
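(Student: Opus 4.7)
The plan is to apply the saddle point asymptotic \eqref{asymptotic h} separately to $h_n$ and $h_{n-j}$, form their quotient in closed form, and then reduce the stated two-sided bound to an elementary monotonicity estimate for an integral of $I_0(t)/t$. Dividing the two asymptotics term by term produces
\[
\frac{h_{n-j}}{h_n} = \sqrt{\frac{I_1(r_n)}{I_1(r_{n-j})}} \, \frac{r_n^n}{r_{n-j}^{n-j}} \, \exp\bigl(I_{-1}(r_{n-j}) - I_{-1}(r_n)\bigr) \, \bigl(1+o(1)\bigr).
\]
Once the $\sqrt{I_1(r_n)/I_1(r_{n-j})}$ prefactor is pulled out to the other side, the claim becomes equivalent to
\[
r_{n-j}^{\,j} \;\leq\; \frac{r_n^n}{r_{n-j}^{n-j}} \exp\bigl(I_{-1}(r_{n-j}) - I_{-1}(r_n)\bigr) \;\leq\; r_n^{\,j},
\]
up to a single $(1+o(1))$ factor that can be absorbed into the $(1+o(1))$ on the left-hand side of the proposition.

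Next I would take logarithms and rearrange. Setting $\Delta := I_{-1}(r_n) - I_{-1}(r_{n-j})$, which is nonnegative since $r_n \geq r_{n-j}$ (the saddle is increasing in $n$) and $I_{-1}$ has nonnegative Taylor coefficients, the displayed inequalities collapse into the single chain
\[
(n-j)\, \log(r_n/r_{n-j}) \;\leq\; \Delta \;\leq\; n \, \log(r_n/r_{n-j}).
\]
The recursion \eqref{recursion relations} applied at $\mu=0$ gives $I_{-1}'(t) = I_0(t)/t$, hence
\[
\Delta = \int_{r_{n-j}}^{r_n} \frac{I_0(t)}{t}\, dt.
\]
Because $I_0$ has nonnegative coefficients it is monotone increasing on $(0,1)$, and the defining equation of the saddle gives $I_0(r_{n-j}) = n-j$ and $I_0(r_n) = n$. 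Thus $n-j \leq I_0(t) \leq n$ throughout the interval of integration, and integrating this double inequality against $dt/t$ delivers precisely the required bounds on $\Delta$.

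The only subtle point is the first step: one has to be sure that the two $(1+o(1))$ factors coming from \eqref{asymptotic h} at indices $n$ and $n-j$ amalgamate into a single $(1+o(1))$ as $n\to\infty$, uniformly in the admissible range of $j$. This is a uniformity statement for the saddle point estimate, which is exactly what Section \ref{sec uniform SPE} is set up to supply; modulo that input the proof consists of the short algebraic reduction and the one-line integral estimate above. In particular, no fine information about $\theta_j$ beyond nonnegativity enters, which is why the same bound is useful across all the different growth regimes considered in Table \ref{table}.
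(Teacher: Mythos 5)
Your proof is correct and follows essentially the same route as the paper: both form the ratio of the two saddle point asymptotics \eqref{asymptotic h}, reduce the claim to a two-sided bound on an increment expressed via the fundamental theorem of calculus and the recursion \eqref{recursion relations}, and conclude by freezing one monotone factor of the integrand at the endpoints. The only difference is an integration by parts in the bookkeeping: the paper applies the FTC to $u \mapsto I_{-1}(u) - I_0(u)\log u$ and bounds $\log u$ in $\int_{r_{n-j}}^{r_n} I_1(u)\log u\,\frac{\dd u}{u}$ using $\int I_1(u)\,\frac{\dd u}{u} = j$, whereas you keep the powers $r^n$ separate and bound $I_0(t)$ in $\Delta = \int_{r_{n-j}}^{r_n} I_0(t)\,\frac{\dd t}{t}$ using $I_0(r_{n-j}) = n-j$ and $I_0(r_n) = n$ --- the two estimates are identical in content.
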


\begin{proof}
The asymptotic approximation \eqref{asymptotic h} for $h_n$ implies that
\be
\frac{h_{n-j}}{h_n} = \sqrt{\frac{I_1(r_n)}{I_1(r_{n-j})}} \e{\Delta(n,j)} \bigl( 1 + o(1) \bigr),
\ee
where $\Delta(n,j)$ can be written as
\be
\Delta(n,j) = I_{-1}(r_{n-j}) - I_0(r_{n-j}) \log r_{n-j} - I_{-1}(r_n) + I_0(r_n) \log r_n.
\ee
By the fundamental theorem of calculus, using \eqref{recursion relations},
\be
\Delta(n,j) = \int_{r_n}^{r_{n-j}} \frac{\dd}{\dd u} \Bigl( I_{-1}(u) - I_0(u) \log u \Bigr) \dd u = \int_{r_{n-j}}^{r_n} \frac{I_1(u) \log u}u \dd u.
\ee
We can bound $\log u \geq \log r_{n-j}$. The integral of $I_1(u)/u$ yields $I_0(r_n) - I_0(r_{n-j}) = j$ and we get the lower bound of the proposition. The upper bound is similar, using $\log u \leq \log r_n$.
\end{proof}

\section{Parameters with super-exponential growth}
\label{sec regime A}

First we consider the regime  when $\theta_n$ diverges fast enough. It is not too hard to show that only one cycle of length $n$ is present, meaning also that all finite cycles have disappeared.

\begin{theorem}
\label{thm large parameters}
Assume that $\theta_n>0$ for all $n$, and that
\[
\lim_{n\to\infty} \sum_{j=1}^{n-1} \frac{\theta_j \theta_{n-j}}{\theta_n} = 0.
\]
Then
\[
\lim_{n\to\infty} \bbP_n(L_1=n) = 1.
\]
\end{theorem}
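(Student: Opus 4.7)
The plan is to reduce the statement to showing that $nh_n/\theta_n \to 1$, and then to establish this by a boundedness-plus-recursion argument driven by the hypothesis.

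First I would apply Proposition \ref{prop rvs}(a) with $j = n$ (and $h_0 = 1$) to write
\[
\bbP_n(L_1 = n) = \frac{\theta_n}{n h_n}.
\]
So it suffices to prove $b_n := n h_n / \theta_n \to 1$. Using the identity \eqref{useful h_n}, i.e.\ $n h_n = \sum_{j=1}^n \theta_j h_{n-j}$, and separating off the $j=n$ term, I would rewrite this as the recursion
\[
b_n = 1 + \sum_{j=1}^{n-1} \frac{\theta_j \theta_{n-j}}{\theta_n} \cdot \frac{b_{n-j}}{n-j}.
\]
Let $\epsilon_n := \sum_{j=1}^{n-1} \theta_j\theta_{n-j}/\theta_n$, which tends to $0$ by hypothesis.

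The key intermediate step is to show that the sequence $(b_n)_{n\geq 1}$ is bounded. I would do this by induction. Choose $n_0$ so that $\epsilon_n \leq 1/2$ for all $n \geq n_0$, and set $B := \max\bigl(2,\, b_1, \dots, b_{n_0-1}\bigr)$. For $n \geq n_0$, assuming $b_m \leq B$ for all $m < n$, the recursion together with $1/(n-j) \leq 1$ gives
\[
b_n \;\leq\; 1 + B \sum_{j=1}^{n-1} \frac{\theta_j \theta_{n-j}}{\theta_n} \cdot \frac{1}{n-j} \;\leq\; 1 + B \epsilon_n \;\leq\; 1 + \tfrac{B}{2} \;\leq\; B,
\]
where the final step uses $B \geq 2$. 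Thus $b_n \leq B$ for every $n$.

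Once uniform boundedness is in hand, the recursion yields
\[
0 \;\leq\; b_n - 1 \;\leq\; B \, \epsilon_n \;\longrightarrow\; 0,
\]
so $b_n \to 1$ and therefore $\bbP_n(L_1 = n) = 1/b_n \to 1$. The only subtle point is closing the induction that bounds $b_n$; the factor $1/(n-j)$ is not actually needed to obtain boundedness, but using the hypothesis directly on $\epsilon_n$ keeps the argument self-contained and avoids any appeal to the saddle-point estimates of Section~2.2, which are not assumed to apply in this super-exponential regime.
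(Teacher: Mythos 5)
Your proposal is correct and takes essentially the same route as the paper: both reduce the claim via Proposition \ref{prop rvs}(a) and the recursion \eqref{useful h_n} to showing $n h_n/\theta_n \to 1$, and both close the argument with an inductive uniform bound driven by the hypothesis (the paper proves $h_n \leq C\theta_n$, while you prove the equivalent, marginally sharper, bound $n h_n/\theta_n \leq B$). The details of your induction and the final estimate $0 \leq b_n - 1 \leq B\varepsilon_n$ check out.
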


It immediately follows that $\bbP_n(K=1) \to 1$ and $\bbP_n(R_j=0) \to 1$ for all fixed $j$.

Let us check that the theorem applies to the parameters $\theta_n = \e{n^\gamma}$ with $\gamma>1$. We have
\be
\frac{\theta_j \theta_{n-j}}{\theta_n} = \e{-n^\gamma [1 - (\frac jn)^\gamma - (1-\frac jn)^\gamma]}.
\ee
It is easy to check that $(1-s)^\gamma \leq 1-cs$ for $0 \leq s \leq \frac12$ with $c = 2 (1 - 2^{-\gamma}) > 1$. Then for $j \leq \frac n2$,
\be
\frac{\theta_j \theta_{n-j}}{\theta_n} \leq \e{-n^\gamma [ c \frac jn - (\frac jn)^\gamma]} \leq \e{-(c-1) n^{\gamma-1} j}.
\ee
It follows that
\be
\sum_{j=1}^{n-1} \frac{\theta_j \theta_{n-j}}{\theta_n} \leq 2 \sum_{j=1}^{n/2} \frac{\theta_j \theta_{n-j}}{\theta_n} \leq 2 \sum_{j\geq1} \e{-(c-1) n^{\gamma-1} j},
\ee
which clearly goes to 0 as $n\to\infty$.

\begin{proof}[Proof of theorem \ref{thm large parameters}]
By the assumption of the theorem, there exists $N$ such that $\theta_j \theta_{n-j} < \theta_n$ for all $n \geq N$. Let $C$ such that $h_n \leq C \theta_n$ for all $n \leq N$. We now prove by induction that this upper bound holds for all $n$. By \eqref{useful h_n} and the induction hypothesis,
\be
h_{n+1} \leq \frac C{n+1} \sum_{j=1}^{n+1} \theta_j \theta_{n+1-j} \leq C \theta_{n+1}.
\ee

We have $\bbP_n(L_1=n) = \frac{\theta_n}{n h_n}$ by Proposition \ref{prop rvs} (a). Using \eqref{useful h_n}, we get
\be
\frac{n h_n}{\theta_n} = \sum_{j=1}^n \frac{\theta_j h_{n-j}}{\theta_n} = 1 + \sum_{j=1}^{n-1} \frac{\theta_j h_{n-j}}{\theta_n}.
\ee
Using $h_{n-j} \leq C \theta_{n-j}$ and the assumption of the theorem, we see that $\frac{n h_n}{\theta_n} \to 1$ as $n\to\infty$.
\end{proof}

\section{Parameters with sub-exponential growth}
\label{sec regime B}

Our goal here is to understand the regime of parameters that grow sub-exponentially, $\theta \approx \e{n^\gamma}$ with $0<\gamma<1$. It turns out to be difficult to tackle this case directly and we appeal to an indirect approach, by focusing on the generating function rather than its parameters.

Let $A,a,b,c$ be positive parameters to be chosen later, and let
\be
\label{Nick's subtle generating function}
G_\theta(z) = A (1-z)^{-c} \e{a (1-z)^{-b}}.
\ee
Then the parameters, $\theta_n$, are the coefficients of $I_0$ or, equivalently, the shifted coefficients of $G_\theta$:
\be
\theta_n = [z^{n-1}] G_\theta(z).
\ee
It is not hard to check (by repeated differentiation) that $\theta_n>0$ for all $n\geq1$. Notice also that the radius of convergence of $G_\theta$ is 1.

\begin{proposition}
\label{prop sub-exp parameters}
\[
\theta_{n+1} = \frac{A (ab)^{\frac1{b+1} (\frac12-c)}}{\sqrt{2\pi (b+1)}} n^{\frac1{b+1} (c - \frac{b+2}2)} \exp \Bigl\{ \bigl[ a(b+1) (ab)^{-\frac b{b+1}}\bigr] n^{\frac b{b+1}} + \bigl[  \tfrac12 (ab)^{\frac2{b+1}}\bigr] n^{\frac{b-1}{b+1}}  + o \bigl( n^{\frac{b-1}{b+1} \vee 0} \bigr) \Bigr\}.
\]
\end{proposition}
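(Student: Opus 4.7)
The plan is to apply the saddle-point formula \eqref{asymptotic theta} directly to $G_\theta$. Taking the technical validity of the uniform saddle point as granted (see Section~\ref{sec uniform SPE}), the concrete task is to locate the saddle $\rho_n$ and then evaluate $G_\theta(\rho_n)/[\rho_n^{n+1/2}\sqrt{2\pi\alpha'(\rho_n)}]$ to the stated order. A direct computation yields $\alpha(z) = cz/(1-z) + abz/(1-z)^{b+1}$, so writing $u = 1 - \rho_n$ the equation $\alpha(\rho_n) = n$ becomes
\[
ab\, u^{-b-1}(1-u) + cu^{-1}(1-u) = n.
\]
Leading balance gives $u \sim u_0 := (ab/n)^{1/(b+1)}$, and bootstrapping once yields
\[
u = u_0 - \frac{u_0^2}{b+1} + \frac{c\, u_0^{b+1}}{ab(b+1)} + (\text{smaller}).
\]
A similar differentiation gives $\alpha'(\rho_n) \sim ab(b+1)\, u^{-b-2}$.

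The polynomial prefactor of the saddle-point formula is $u^{(b+2)/2-c}/\sqrt{2\pi ab(b+1)}$ (the factor $(1-u)^{-1/2}$ coming from $\rho_n^{-1/2}$ absorbs into $1+o(1)$), which on substituting $u \sim u_0$ reproduces $\frac{A(ab)^{(1/2-c)/(b+1)}}{\sqrt{2\pi(b+1)}}\, n^{(c-(b+2)/2)/(b+1)}$ after cancelling one half-power of $ab$ between numerator and $\sqrt{\alpha'(\rho_n)}$. The heart of the argument is then the exponent
\[
E = au^{-b} + n\bigl(u + u^2/2 + u^3/3 + \cdots\bigr),
\]
which I expand term by term using the refined expansion of $u$ above. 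Direct computation gives $au^{-b} = au_0^{-b} + \frac{ab\, u_0^{1-b}}{b+1} - \frac{c}{b+1} + (\text{err})$ and $nu = nu_0 - \frac{nu_0^2}{b+1} + \frac{c}{b+1} + (\text{err})$, while $nu^2/2 = nu_0^2/2 + (\text{err})$. Two cancellations then appear: since $ab\, u_0^{1-b} = nu_0^2 = (ab)^{2/(b+1)} n^{(b-1)/(b+1)}$, the $(b+1)^{-1}$-weighted terms cancel exactly, and the two $\pm c/(b+1)$ constants cancel too. The residue $au_0^{-b} + nu_0 + nu_0^2/2$ evaluates to $a(b+1)(ab)^{-b/(b+1)} n^{b/(b+1)} + \tfrac12(ab)^{2/(b+1)} n^{(b-1)/(b+1)}$, as claimed.

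The main obstacle is the error bookkeeping: one must verify that every discarded term, including all $nu^k/k$ with $k \geq 3$ and the quadratic $\delta^2$ corrections from expanding $u^{-b}$, is $o(n^{(b-1)/(b+1) \vee 0})$. This reduces to comparing powers of $n$ via $u_0^{b+1} = ab/n$; the worst-case contribution $nu_0^3 = O(n^{(b-2)/(b+1)})$ is always within the tolerance since $(b-2)/(b+1) < (b-1)/(b+1)$ (and is automatically $o(1)$ when $b<1$). A subtle point is that the relative sizes of $u_0^2$ and $u_0^{b+1}$ swap at $b=1$: for $b<1$ the $u_0^{b+1}$ refinement of $u$ is essential in order to produce the constant-$c$ cancellation, whereas for $b \geq 1$ the quadratic correction alone suffices.
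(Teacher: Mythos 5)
Your proposal is correct and follows essentially the same route as the paper: invoke the saddle-point formula \eqref{asymptotic theta}, compute $\alpha$, bootstrap the saddle equation to get $1-\rho_n = u_0 - \tfrac{u_0^2}{b+1} + \tfrac{c}{(b+1)n} + o(\cdot)$ (identical to the paper's expansion of $\rho_n$), and substitute; your combined exponent $E$ with its two exact cancellations is just a repackaging of the paper's separate expansions of $G_\theta(\rho_n)$ and $\rho_n^{-n}$, where the same $\mp c/(b+1)$ constants and $n^{(b-1)/(b+1)}$ terms recombine. The error bookkeeping you flag (in particular $nu_0^3 = O(n^{(b-2)/(b+1)})$ and the $b\lessgtr 1$ dichotomy between the $u_0^2$ and $u_0^{b+1}$ corrections) matches the paper's stated error $o(n^{-(1\wedge\frac{2}{b+1})})$ in $\rho_n$.
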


\begin{proof}
We use the saddle point method, which has long been used for this class of functions \cite{Wright}. We have
\be
\begin{split}
&\alpha(z) = z\frac{d}{dz} \log G_\theta(z) = z  \left[ \frac{c}{1-z} + \frac{ab}{\left( 1-z \right)^{b+1}} \right],\\
&I_1(z) =  I_0(z)\left(\alpha(z) + 1\right) =  I_0(z) \bigl[ ab z (1-z)^{-b-1} + cz (1-z)^{-1} + 1 \bigr].
\end{split}
\ee
Notice that, as $z\to1$,
\be
\label{asymptotic for a'}
\alpha'(z) = ab (b+1) (1-z)^{-b-2} (1+o(1)).
\ee
Defining $\rho_n$ by $\alpha(\rho_n) = n$ we obtain
\be
\rho_n = 1 - \bigl( \tfrac{ab}n \bigr)^{\frac1{b+1}} \bigl[ \rho_n + \tfrac c{ab} (1-\rho_n)^b - \tfrac{c}{ab} (1-\rho_n)^{b+1} \bigr]^{\frac{1}{b+1}} .
\ee
By the implicit function theorem, we get
\be
\begin{split}
\rho_n &= 1 - \bigl( \tfrac{ab}n \bigr)^{\frac1{b+1}}  + \tfrac{1}{b+1} \bigl( \tfrac{ab}n \bigr)^{\frac2{b+1}} - \tfrac c{b+1} \bigl( \tfrac{1}n \bigr) + o\bigl( n^{-(1 \wedge \frac 2{b+1})} \bigr)  \\
&= \exp\Bigl\{ - \bigl( \tfrac{ab}n \bigr)^{\frac1{b+1}} + \tfrac12 \tfrac{1-b}{1+b}\bigl( \tfrac{ab}n \bigr)^{\frac2{b+1}} - \tfrac c{b+1} \bigl( \tfrac{1}n \bigr) + o\bigl( n^{-(1 \wedge \frac 2{b+1})} \bigr)\Bigr\}.
\end{split}
\ee
We compute the terms that appear in the formula \eqref{asymptotic theta} for $\theta_n$. First,
\be
\begin{split}
&G_\theta(\rho_n) = A \bigl( \tfrac n{ab} \bigr)^{\frac c{b+1}} \exp\Bigl\{ a \bigl( \tfrac n{ab} \bigr)^{\frac b{b+1}} + \tfrac{ab}{b+1} \bigl( \tfrac n{ab} \bigr)^{\frac{b-1}{b+1}} - \tfrac{c}{b+1} + o\bigl( n^{\frac{b-1}{b+1} \vee 0} \bigr) \Bigr\}, \\
&\rho_n^{-n} = \exp\Bigl\{ (ab)^{\frac1{b+1}} n^{\frac b{b+1}} + \tfrac12 \tfrac{b-1}{b+1} (ab)^{\frac2{b+1}} n^{\frac{b-1}{b+1}} + \tfrac{c}{b+1} 
+ o\bigl( n^{\frac{b-1}{b+1} \vee 0} \bigr) \Bigr\}, \\
&\alpha'(\rho_n) = (b+1) (ab)^{-\frac1{b+1}} n^{\frac{b+2}{b+1}} (1+o(1)).
\end{split}
\ee
We get the proposition by inserting these values into \eqref{asymptotic theta}.
\end{proof}

We choose the numbers $A,a,b,c$ so that $\theta_n \approx \e{n^\gamma}$. Precisely, let
\be
\label{the right choice}
\begin{split}
&b = \tfrac\gamma{1-\gamma}, \\
&a = (1-\gamma) \gamma^{\frac\gamma{1-\gamma}}, \\
&c= \tfrac b2 + 1, \\
&A = \sqrt{2\pi (b+1)} (ab)^{-\frac1{b+1} (\frac12 - c)}.
\end{split}
\ee
They imply the following relations, that are often useful when checking the details of the calculations:
\be
\gamma = \tfrac b{b+1}, \qquad ab = \gamma^{\frac1{1-\gamma}}, \qquad \tfrac1{b+1} = 1-\gamma.
\ee
With these numbers, the precise asymptotic expression of $\theta_n$ is
\be
\label{precise theta}
\theta_n = \exp\Bigl\{ n^\gamma + \tfrac12 \gamma^2 n^{2\gamma-1} + o \bigl( n^{(2\gamma-1) \vee 0} \bigr) \Bigr\}.
\ee
Notice that $\theta_n = \e{n^\gamma} (1+o(1))$ when $\gamma<\frac12$. The case $\gamma=\frac12$ can be handled by modifying the number $A$. For the case $\gamma>\frac12$, the correction $n^{2\gamma-1}$ is present in the exponential and it cannot be removed easily.

It is time to state the main result of this section.

\begin{theorem}
Consider the set of parameters $\theta_1, \theta_2, \dots$ whose generating function is given by Eq.\ \eqref{Nick's subtle generating function} with $A, a, b, c$ specialized as in  Eq.\ \eqref{the right choice}. Then
\begin{itemize}
\item[(a)] $\displaystyle \frac{L_1}{(\log n)^{1/\gamma}} \Rightarrow (1-\gamma)^{-1/\gamma}$.
\item[(b)] $R_1,R_2, \dots$ converge weakly to independent Poisson random variables with respective means $\theta_1, \frac{\theta_2}2, \dots$.
\end{itemize}
\end{theorem}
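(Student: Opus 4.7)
The plan is to combine Proposition \ref{prop rvs}(a), Proposition \ref{prop great bounds}, the refined expression for $\theta_j$ given in \eqref{precise theta}, and a Laplace-type argument to identify where the distribution of $L_1$ concentrates; part (b) will then follow easily. I would begin by extracting the asymptotics of the saddle point $r_n$ and of $I_1(r_n)$. Since $I_0(z)=zG_\theta(z)=Az(1-z)^{-c}\e{a(1-z)^{-b}}$, taking logarithms in $I_0(r_n)=n$ and inverting yields $1-r_n=(a/\log n)^{1/b}(1+o(1))$. Using $I_1(z)=I_0(z)(1+\alpha_\theta(z))$ from the computation preceding Proposition \ref{prop sub-exp parameters}, one then obtains
\[
I_1(r_n)\sim n\cdot\frac{ab}{(1-r_n)^{b+1}}=n(1-\gamma)^{-1/\gamma}(\log n)^{1/\gamma},
\]
the second equality via $ab=\gamma^{1/(1-\gamma)}$ and $b=\gamma/(1-\gamma)$ from \eqref{the right choice}.

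Part (b) is immediate from Corollary \ref{cor Poisson} once $h_{n-1}/h_n\to 1$. Applying Proposition \ref{prop great bounds} with $j$ fixed, we have $r_n\to 1$ so $r_n^j,\,r_{n-j}^j\to 1$; moreover $r_n-r_{n-j}=O(j/I_1(r_n))$ is asymptotically much smaller than $1-r_n$ itself, so $I_1(r_n)/I_1(r_{n-j})\to 1$ and both sides of the sandwich converge to $1$. For part (a), write $\bbP_n(L_1=j)=\theta_j h_{n-j}/(nh_n)$; by Proposition \ref{prop great bounds} and \eqref{precise theta} the ratio $\theta_j h_{n-j}/h_n$ equals $\theta_j r_n^j$ up to a slowly varying factor, so $j\mapsto\theta_j r_n^j/n$ governs the distribution. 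The exact identity $\sum_j\theta_j r_n^j=I_0(r_n)=n$ automatically normalizes this to leading order. Taking logarithms, the shape is controlled by $f_n(j)=j^\gamma-c_nj$ with $c_n=1-r_n$, whose unique critical point is $j^\star=(\gamma/c_n)^{1/(1-\gamma)}$; a direct simplification using \eqref{the right choice} converts this into $j^\star=(1-\gamma)^{-1/\gamma}(\log n)^{1/\gamma}(1+o(1))$.

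A standard Laplace argument then closes (a). Since $f_n''(j^\star)=\gamma(\gamma-1)(j^\star)^{\gamma-2}$ is negative and of polynomial size, the Laplace peak has width $\sigma=O((j^\star)^{(2-\gamma)/2})=O((\log n)^{(2-\gamma)/(2\gamma)})$, which is of strictly smaller order than $j^\star$ itself since $(2-\gamma)/(2\gamma)<1/\gamma$. Splitting $\sum_j\bbP_n(L_1=j)$ into a central window $|j-j^\star|\le\delta j^\star$, where a quadratic Taylor expansion of $f_n$ is valid, and its complement, where monotonicity of $f_n$ on either side of $j^\star$ yields geometric tails, one concludes that all the mass concentrates on $|j-j^\star|=o(j^\star)$; this is convergence in probability, and hence in distribution, of $L_1/j^\star$ to $1$.

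The main obstacle is the careful bookkeeping of subleading corrections in the regime $\gamma\ge\tfrac12$. In that range the term $\tfrac12\gamma^2 j^{2\gamma-1}$ in $\log\theta_j$ from \eqref{precise theta} and the term $-\tfrac12 jc_n^2$ coming from the quadratic piece of $\log r_n$ are each of order $(\log n)^{(2\gamma-1)/\gamma}$ at $j=j^\star$, which is precisely the scale on which the Laplace peak varies. Using the critical-point relation $c_n=\gamma(j^\star)^{\gamma-1}$ one verifies the matching identity $\tfrac12\gamma^2(j^\star)^{2\gamma-1}=\tfrac12 j^\star c_n^2$, so the two subleading corrections cancel at the peak; propagating this cancellation to a full neighborhood of $j^\star$ is the delicate step that keeps the Laplace estimate valid, and is precisely the reason Proposition \ref{prop sub-exp parameters} was stated with two explicit terms in the exponent rather than only the leading one.
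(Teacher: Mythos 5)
Your proposal is correct and follows essentially the same route as the paper: Proposition \ref{prop rvs}(a) plus Proposition \ref{prop great bounds} and \eqref{precise theta} reduce everything to the function $j\mapsto j^\gamma-(1-r_n)j$, whose maximizer $j^\star=(1-\gamma)^{-1/\gamma}(\log n)^{1/\gamma}$ is exactly the paper's $B(\log n)^{1/\gamma}$, and the paper's convexity bound $f(x)\geq\delta|x-B|$ is the same tail estimate you describe as ``geometric tails from monotonicity.'' Two small remarks. First, your tail argument silently assumes the saddle-point control of $h_{n-j}/h_n$ is uniform all the way up to $j=n$; it is not (Proposition \ref{prop great bounds} needs $n-j\to\infty$), and the range $j\geq n/2$ --- in particular $j=n,n-1$ --- must be disposed of by a separate crude estimate, as the paper does. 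Second, the cancellation $\tfrac12\gamma^2(j^\star)^{2\gamma-1}=\tfrac12 j^\star(1-r_n)^2$ you verify is a correct identity but is not actually needed for the statement being proved: since the limit of $L_1/(\log n)^{1/\gamma}$ is a constant, it suffices that both corrections are $o(\log n)$ uniformly on the relevant range (they are, since $(2\gamma-1)/\gamma<1$), which is how the paper absorbs them; your finer bookkeeping would only become necessary for a local limit theorem at the peak.
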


\begin{proof}
Let $B = (1-\gamma)^{-1/\gamma}$.
We show that for any $\varepsilon>0$,
\be
\lim_{n\to\infty} \bbP_n \Bigl( \Bigl| \frac{L_1}{(\log n)^{1/\gamma}} - B \Bigr| > \varepsilon \Bigr) = 0.
\ee
By Proposition \ref{prop rvs} (a), we have
\be
\bbP_n \Bigl( \Bigl| \frac{L_1}{(\log n)^{1/\gamma}} - B \Bigr| > \varepsilon \Bigr) = \sum_{j : |\frac j{(\log n)^{1/\gamma}} - B| > \varepsilon} \frac{\theta_j h_{n-j}}{n h_n}.
\ee
We use the saddle point method for the generating function $G_h = \e{I_{-1}}$. The equation $I_0(r_n)=n$ implies that
\be
\label{r_n}
r_n = 1 - \bigl[ \tfrac1a \log \tfrac{n (1-r_n)^c}{A r_n} \bigr]^{-1/b}.
\ee
(We keep using $A,a,b,c$ rather than $\gamma$ for convenience.) By the implicit function theorem, we get
\be
\label{subexpscale}
\begin{split}
r_n &= 1 - a^{\frac1b} (\log n)^{-\frac1b} + O\bigl( \tfrac{\log\log n}{(\log n)^{b+1}} \bigr) \\
&= \exp\Bigl\{ - a^{\frac1b} (\log n)^{-\frac1b} + O\bigl( \tfrac{\log\log n}{(\log n)^{b+1}} \vee (\log n)^{-\frac2b} \bigr) \Bigr\}.
\end{split}
\ee
It follows that $I_1(r_n) = n (\frac1a \log n)^{1/\gamma} (1+o(1))$. We use Proposition \ref{prop great bounds} to get
\be
\frac{h_{n-j}}{h_n} \leq \sqrt{\tfrac n{n-j}} \bigl( \tfrac{\log n}{\log(n-j)} \bigr)^{1/2\gamma} \exp\Bigl\{ -j a^{\frac1b} (\log n)^{-\frac1b} \bigl( 1 + O\bigl( \tfrac{\log\log n}{\log n} \vee \tfrac1{(\log n)^{1/b}} \bigr) \bigr) \Bigr\}.
\ee
The cases $j=n$ and $j=n-1$ need actually to be handled separately. Using the expression \eqref{precise theta} for $\theta_j$ and the bound above, it is easy to check that
\be
\lim_{n\to\infty} \sum_{j=n/2}^n \frac{\theta_j h_{n-j}}{n h_n} = 0.
\ee
For $1 \leq j \leq n/2$, we have
\bm
\sum_{j : |\frac j{(\log n)^{1/\gamma}} - B| > \varepsilon} \frac{\theta_j h_{n-j}}{n h_n} \leq C \sum_{j : |\frac j{(\log n)^{1/\gamma}} - B| > \varepsilon} \exp\Bigl\{ j^\gamma - j a^{\frac1b} (\log n)^{-\frac1b} - \log n \\
+ O(j^{(2\gamma-1) \vee 0}) + O\bigl( \tfrac{\log\log n}{\log n} \vee \tfrac1{(\log n)^{1/b}} \bigr) \Bigr\}.
\end{multline}
Let us make the change of variables $j = i (\log n)^{1/\gamma}$. Then
\be
\sum_{j : |\frac j{(\log n)^{1/\gamma}} - B| > \varepsilon} \frac{\theta_j h_{n-j}}{n h_n} \leq C \sumtwo{i \in (\log n)^{-1/\gamma} \bbN}{|i - B| > \varepsilon} \e{-\log n [a^{\frac1b} i - i^\gamma + 1+ o(1)]}.
\ee
It is easy to see that the function $f(x) = a^{\frac1b} x - x^\gamma + 1$ is convex with a minimum at $B = (1-\gamma)^{-1/\gamma}$, where it takes value 0. For $|x-B|>\varepsilon$ we can bound $f(x) > \delta |x-B|$. We can estimate the sum by an integral, in order to get
\be
\sum_{j : |\frac j{(\log n)^{1/\gamma}} - B| > \varepsilon} \frac{\theta_j h_{n-j}}{n h_n} \leq C (\log n)^{1/\gamma} \int_{|x-B|>\varepsilon} \e{-(\log n) \delta |x-B|} \dd x,
\ee
which clearly vanishes in the limit $n\to\infty$. This proves (a).

It is clear from Proposition \ref{prop great bounds} and Eq.\ \eqref{subexpscale} that $h_{n-1}/h_n \to 1$ as $n\to \infty$, so that (b) follows immediately from Corollary \ref{cor Poisson}.
\end{proof}

\section{Parameters with algebraic growth}
\label{sec regime C}

We again work with a generating function rather than parameters. Recall that $\gamma>0$, and let
\be
\label{algebraic gen fct}
I_0(z) = \frac{\Gamma(\gamma+1)}{(1-z)^{\gamma+1}} - \Gamma(\gamma + 1).
\ee
One easily checks that
\be
\frac{\dd^n}{\dd z^n} I_0(z) = \frac{\Gamma(n+\gamma+1)}{(1-z)^{\gamma+n+1}}.
\ee
One then gets the parameters:
\be
\label{algparams}
\theta_n = [z^n] I_0(z) = \frac{1}{n!} \frac{\dd^n}{\dd z^n} I_0(0) = \frac{\Gamma(\gamma+n+1)}{n!}.
\ee
By a straightforward application of Stirling's formula one sees that the  parameters grow algebraically:
\be
\theta_n = n^\gamma (1+o(1)).
\ee

\begin{theorem}
Choose $\theta_1,\theta_2,\dots$ such that their generating function is given by Eq.\ \eqref{algebraic gen fct}. Then
\begin{itemize}
\item[(a)] $L_1 / n^{\frac1{1+\gamma}}$ converges weakly to the Gamma random variable with parameters $(\gamma+1,a)$ with $a = \Gamma(\gamma+1)^{\frac1{\gamma+1}}$. In other words, we have
\[
\lim_{n\to\infty} \bbP_n \Bigl( \frac{L_1}{n^{1/(1+\gamma)}} < s \Bigr) = \int_0^s x^\gamma \e{-a x} \dd x.
\]
\item[(b)] $\displaystyle \lim_{n\to\infty} n^{-\frac\gamma{\gamma+1}} \bbE_n(K) = \bigl( \Gamma(\gamma) / \gamma^\gamma \bigr)^{\frac1{\gamma+1}}$.
\item[(c)] The distribution of number of finite cycles converges weakly to independent Poisson random variables with means $\theta_1, \frac{\theta_2}2, \dots$.
\end{itemize}
\end{theorem}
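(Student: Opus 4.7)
The plan is to carry out the saddle point analysis described in Section 2.2 for this specific generating function, then assemble the three claims as corollaries. First, from $I_0(z) = \Gamma(\gamma+1)\bigl[(1-z)^{-(\gamma+1)}-1\bigr]$ the saddle point equation $I_0(r_n)=n$ can be solved in closed form, giving
\[
1 - r_n = \Bigl( 1 + \tfrac n{\Gamma(\gamma+1)} \Bigr)^{-\frac1{\gamma+1}} = \Bigl( \tfrac{\Gamma(\gamma+1)}n \Bigr)^{\frac1{\gamma+1}} \bigl(1+o(1)\bigr).
\]
Since $I_1(z) = z I_0'(z) = \Gamma(\gamma+2)\, z(1-z)^{-(\gamma+2)}$, one gets
$I_1(r_n) = \Gamma(\gamma+2)\bigl(\frac n{\Gamma(\gamma+1)}\bigr)^{(\gamma+2)/(\gamma+1)}(1+o(1))$, and in particular $I_1(r_n)/I_1(r_{n-j}) \to 1$ uniformly for $j=o(n)$. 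Setting $a = \Gamma(\gamma+1)^{1/(\gamma+1)}$, the asymptotics of $r_n$ give, for $j = s n^{1/(\gamma+1)}$ with $s$ in a bounded interval,
\[
j \log r_n = -j(1-r_n)\bigl(1+O(1-r_n)\bigr) \;\longrightarrow\; -a s,
\]
and the same bound holds for $j \log r_{n-j}$. Plugging these into Proposition~\ref{prop great bounds} then yields
\[
\frac{h_{n-j}}{h_n} = \e{-a s}\bigl(1+o(1)\bigr) \qquad \text{when } j/n^{1/(\gamma+1)} \to s.
\]

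For part (a), Proposition~\ref{prop rvs}(a) and the Stirling asymptotics $\theta_j = j^\gamma(1+o(1))$ give
\[
\bbP_n\bigl(L_1 = j\bigr) = \frac{\theta_j\, h_{n-j}}{n\, h_n} = \frac{j^\gamma}{n}\, \e{-a s}\bigl(1+o(1)\bigr) = n^{-\frac1{\gamma+1}}\, s^\gamma \e{-a s}\bigl(1+o(1)\bigr).
\]
Summing $j$ over an interval $j \in n^{1/(\gamma+1)}[s_1,s_2]$ converts into a Riemann sum for $\int_{s_1}^{s_2} s^\gamma\e{-as}\,ds$; since $a^{\gamma+1} = \Gamma(\gamma+1)$, this is exactly the density of the $\text{Gamma}(\gamma+1,a)$ distribution. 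To conclude, one must show that the tails $j \ll n^{1/(\gamma+1)}$ and $j \gg n^{1/(\gamma+1)}$ (in particular $j$ comparable to $n$) contribute negligibly; this is handled by the upper bound in Proposition~\ref{prop great bounds}, which gives $\theta_j h_{n-j}/(n h_n) \leq C n^{-1/(\gamma+1)} j^\gamma \e{-c j/n^{1/(\gamma+1)}}$ uniformly, integrable in $j$ with the desired decay.

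For part (b), part (a) together with its tail control suggests that
\[
\bbE_n(K) = \sum_{j=1}^n \frac{\theta_j h_{n-j}}{j h_n} \sim \int_0^\infty \bigl( s n^{\frac1{\gamma+1}} \bigr)^{\gamma-1} \e{-as}\, n^{\frac1{\gamma+1}}\, ds = n^{\frac\gamma{\gamma+1}} \frac{\Gamma(\gamma)}{a^\gamma},
\]
and a direct computation using $a^{\gamma+1} = \gamma\Gamma(\gamma)$ gives $\Gamma(\gamma)/a^\gamma = \bigl(\Gamma(\gamma)/\gamma^\gamma\bigr)^{1/(\gamma+1)}$, which is the claimed constant. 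Rigorous justification again requires splitting the sum: on $j \leq M n^{1/(\gamma+1)}$ one uses the pointwise asymptotic with dominated convergence, while for $j$ larger the integrable tail estimate above suffices. Part (c) is immediate: the case $j=1$ of Proposition~\ref{prop great bounds} gives $h_{n-1}/h_n = r_n \cdot (1+o(1)) \to 1$, so Corollary~\ref{cor Poisson} applies.

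The main obstacle I anticipate is not the leading-order asymptotics (which drop out of the saddle point machinery cleanly because $I_0$ and $I_1$ are explicit) but rather the uniform tail control needed to upgrade pointwise convergence of $\bbP_n(L_1=j)$ to convergence of the cumulative distribution function in (a) and of the full sum in (b). The delicate regime is $j$ of order $n$, where Proposition~\ref{prop great bounds} only gives $h_{n-j}/h_n \lesssim \sqrt{I_1(r_n)/I_1(r_{n-j})} r_n^j$; one must verify that the polynomial prefactor $I_1(r_n)/I_1(r_{n-j})$ does not blow up faster than the exponential decay $r_n^j$ can absorb. Since $r_n^j$ decays like $\exp(-a j/n^{1/(\gamma+1)})$, and the prefactor grows only like a power of $n/(n-j)$, this is routine but must be checked.
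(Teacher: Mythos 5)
Your proposal is correct and follows essentially the same route as the paper: the same closed-form solution of $I_0(r_n)=n$, the same use of Proposition~\ref{prop great bounds} to get $h_{n-j}/h_n = \e{-ajn^{-1/(1+\gamma)}}(1+o(1))$, and the same Riemann-sum limits for (a) and (b), with (c) via Corollary~\ref{cor Poisson}. The only blemish is a typo in your stated tail bound, where the prefactor should be $j^\gamma/n$ rather than $n^{-1/(\gamma+1)}j^\gamma$ (as written the tail sum would not vanish); with the corrected prefactor the rescaling $j = s\,n^{1/(\gamma+1)}$ gives an integrable tail exactly as you intend.
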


\begin{proof}
We use the saddle point method. Let $r_n$ be defined by $I_0(r_n) = n$. Then
\be
r_n = 1 - \bigl( \tfrac n{\Gamma(1+\gamma)} + 1 \bigr)^{-\frac1{1+\gamma}}.
\ee
It is enough for our purpose to retain 
\be
\label{algscale}
r_n = 1 - a n^{-\frac1{1+\gamma}} + O(n^{-\frac{2+\gamma}{1+\gamma}}) = \exp\bigl\{ -a n^{-\frac1{1+\gamma}} + O(n^{-\frac2{1+\gamma}}) \bigr\}.
\ee
In order to use Proposition \ref{prop great bounds}, we check that $r_{n-j}^j$ is close to $r_n^j$. We assume from now on that $j < C n^{1/(1+\gamma)}$ for a constant $C$ independent of $n$. A few calculations yield
\be
r_{n-j}^j = \exp\bigl\{ -a j n^{-\frac1{1+\gamma}} + O(j^2 n^{-\frac{2+\gamma}{1+\gamma}} \vee j n^{-\frac2{1+\gamma}}) \bigr\}.
\ee
Then
\be
r_{n-j}^j = r_n^j (1+o(1)).
\ee
We also have
\be
\label{algI1}
I_1(z) = \Gamma(\gamma + 2) \, z \, (1-z)^{-\gamma-2}.
\ee
Then
\be
\begin{split}
\frac{I_1(r_n)}{I_1(r_{n-j})} &= \frac{r_n}{r_{n-j}} \Bigl( \frac{n }{n-j} \Bigr)^{\frac{2+\gamma}{1+\gamma}} \left(1 + o(1)\right)\\
&= \e{\frac{aj}{1+\gamma}n^{-\frac{\gamma + 2}{\gamma + 1}}}\left(1 + o(1)\right)
\end{split}
\ee
It follows that
\be
\begin{split}
\frac{h_{n-j}}{h_n} &= r_n^j \sqrt{\frac{I_1(r_n)}{I_1(r_{n-j})}}\bigl( 1 + o(1) \bigr)\\
\label{algebraic ratio} &=  \e{-a j n^{-1/(1+\gamma)}} \bigl( 1 + o(1) \bigr).
\end{split}
\ee
We can now proceed to the calculation of the distribution of $L_1$. Using Proposition \ref{prop rvs} (a), we have
\be
\bbP_n \Bigl( \frac{L_1}{n^{1/(1+\gamma)}} < s \Bigr) = \sum_{j=1}^{s \, n^{1/(1+\gamma)}} \frac{j^\gamma}n \e{-a j n^{-1/(1+\gamma)}} \bigl( 1 + o(1) \bigr).
\ee
(We can use the asymptotic value for $\theta_j$ because finite $j$ contribute a vanishing amount.) We rescale the variables in order to recognize a Riemann integral:
\be
\bbP_n \Bigl( \frac{L_1}{n^{1/(1+\gamma)}} < s \Bigr) = \frac1{n^{\frac1{1+\gamma}}} \sum_{j=1}^{s \, n^{1/(1+\gamma)}} \Bigl( \frac j{n^{1/(1+\gamma)}} \Bigr)^\gamma \e{-a \frac j{n^{1/(1+\gamma)}}} \bigl( 1 + o(1) \bigr).
\ee
As $n\to\infty$, this converges to the probability that the Gamma random variable with parameters $(\gamma+1,a)$ be less than $s$.

For part (b) we use Proposition \ref{prop rvs} (b). Using $\theta_j = j^\gamma (1+o(1))$ and Eq.\ \eqref{algebraic ratio}, we have
\be
\bbE_n(K) = \sum_{j=1}^n j^{\gamma-1} \e{-a j n^{1/(\gamma+1)}} \bigl( 1 + o(1) \bigr).
\ee
Notice that the contribution of finite $j$ vanishes, which justifies using the asymptotic expression for $\theta_j$. Introducing the appropriate scaling that leads to a Riemann integral, we rewrite the expression as
\be
\frac{a^\gamma}{n^{\frac\gamma{\gamma+1}}} \bbE_n(K) = \frac a{n^{\frac1{\gamma+1}}} \sum_{j=1}^n \Bigl( \frac{aj}{n^{\frac1{\gamma+1}}} \Bigr)^{\gamma-1} \e{-a j n^{-\frac1{\gamma+1}}} \bigl( 1 + o(1) \bigr).
\ee
The right side converges to $\int_0^\infty x^{\gamma-1} \e{-x} \dd x = \Gamma(\gamma)$ and we obtain the claim (b).

Part (c) follows from \eqref{algebraic ratio} and Corollary \ref{cor Poisson}.
\end{proof}

\section{Asymptotic Ewens parameters}
\label{sec regime D}

Past studies of the Ewens distribution have focused on the number of cycles. It was shown in particular that the number of cycles with length less than $n^s$ is approximately equal to $\theta s \log n$ for all $0<s\leq1$, and that it satisfies a central limit theorem \cite{Han} and a large deviation principle \cite{FH}. In this section we consider the case where $\theta_j \to \theta$ as $j\to\infty$. We look at the distribution of finite cycles and at the joint distribution of the largest cycles. Let $L^{(1)}, L^{(2)}, \dots$ denote the cycle lengths in nonincreasing order (for all $\sigma \in \caS_n$ we have $\sum_j j R_j(\sigma) = \sum_i L^{(i)} = n$).

The large cycle lengths converge to the Poisson-Dirichlet distribution. In order to define it, first consider a sequence of i.i.d.\ beta random variables with parameters $(1,\theta)$, $(X_1,X_2,\dots)$. That is, $\bbP(X>s) = (1-s)^\theta$ for $0\leq s\leq1$. Then form the sequence $(X_1, (1-X_1) X_2, (1-X_1) (1-X_2) X_3, \dots)$. It is not hard to check that it is a random partition of $[0,1]$, which is called the Griffiths-Engen-McCloskey distribution. Reorganizing these numbers in nonincreasing order gives another random partition of $[0,1]$, and the corresponding distribution is called Poisson-Dirichlet.

\begin{theorem}
\label{thm Ewens}
Assume that $\theta_n \to \theta$. Then, as $n\to\infty$,
\begin{itemize}
\item[(a)] the random variables $R_1, R_2, R_3, \dots$ converge weakly to independent Poisson with respective means $\theta_1, \frac{\theta_2}2, \frac{\theta_3}3, \dots$;
\item[(b)] the total number of cycles is logarithmic: $\displaystyle \lim_{n\to\infty} \frac{\bbE_n(K)}{\log n} = \theta$;
\item[(c)] the joint distribution of $\frac{L^{(1)}}n, \frac{L^{(2)}}n, \dots$ converges weakly to Poisson-Dirichlet with parameter $\theta$.
\end{itemize}
\end{theorem}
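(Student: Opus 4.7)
The plan is to extract asymptotics for $h_n$ from the saddle point machinery of Section~2 and then apply them three different ways. Under $\theta_n \to \theta$ one has $I_0(z) \sim \theta/(1-z)$ and $I_1(z) \sim \theta/(1-z)^2$ as $z\to 1^-$, so the saddle equation $I_0(r_n)=n$ forces $1-r_n \sim \theta/n$. Proposition~\ref{prop great bounds} then yields, for $j/n$ bounded away from $1$,
\[
\frac{h_{n-j}}{h_n} = \Bigl(1-\frac{j}{n}\Bigr)^{\theta-1}\bigl(1+o(1)\bigr),
\]
and in particular $h_{n-j}/h_n \to 1$ for each fixed $j$. Part~(a) then follows directly from Corollary~\ref{cor Poisson}.

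For part~(b), I would insert $\theta_j \to \theta$ together with the above ratio into Proposition~\ref{prop rvs}(b):
\[
\bbE_n(K) = \sum_{j=1}^{n} \frac{\theta_j}{j} \frac{h_{n-j}}{h_n}
= \theta \sum_{j=1}^{n-1} \frac{1}{j}\Bigl(1-\frac{j}{n}\Bigr)^{\theta-1}\bigl(1+o(1)\bigr) + o(\log n).
\]
The main sum is $\theta \log n + O(1)$ (split at $j=n/\log n$: the small-$j$ piece is the harmonic sum up to $n/\log n$, the remainder is $O(\log\log n)$), so dividing by $\log n$ yields the claim.

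Part~(c) is the substantive one; I would proceed via the size-biased recursion that builds up GEM. Proposition~\ref{prop rvs}(a) and the ratio asymptotic give
\[
\bbP_n\bigl(L_1/n > s\bigr) = \sum_{j>sn} \frac{\theta_j h_{n-j}}{n h_n}
\;\longrightarrow\; \int_s^1 \theta (1-u)^{\theta-1}\,\dd u = (1-s)^\theta,
\]
so $L_1/n \Rightarrow X_1 \sim \mathrm{Beta}(1,\theta)$. Next, conditional on $L_1 = j_1$, the restriction of $\sigma$ to $[n] \setminus C_1$ (where $C_1$ is the cycle containing $1$) is distributed as $\bbP_{n-j_1}$ with the same weights $\theta_k$; letting $L_2$ denote the length of the cycle containing the smallest index outside $C_1$ and applying the first step to a system of size $n-L_1$ gives $L_2/(n-L_1) \Rightarrow X_2 \sim \mathrm{Beta}(1,\theta)$, independent of $X_1$. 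Iterating, $(L_1/n, L_2/n, L_3/n, \ldots)$ converges jointly to $\bigl(X_1, (1-X_1)X_2, (1-X_1)(1-X_2)X_3, \ldots\bigr) = \mathrm{GEM}(\theta)$, and the nonincreasing rearrangement then gives $\mathrm{PD}(\theta)$ by the continuous mapping theorem.

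The delicate point is the iteration in part~(c): one must upgrade the marginal convergence to joint convergence, and control the event that $L_1+\cdots+L_{k-1}$ comes close to $n$ (so that the reduced system is too small for the asymptotics of step one to apply). Both issues are manageable because $\prod_{i<k}(1-X_i)$ is bounded below in probability for each fixed $k$, and the bounds of Proposition~\ref{prop great bounds} are uniform enough in the system size to stay in the asymptotic regime. Passing from finite-dimensional joint convergence to convergence of the full infinite sequence is then a standard tightness argument based on the tail bound $\sum_{i>k} L_i/n \to \prod_{i\leq k}(1-X_i)$, followed by letting $k\to\infty$.
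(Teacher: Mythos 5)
Your overall architecture coincides with the paper's: part (a) via Corollary \ref{cor Poisson}, part (b) via Proposition \ref{prop rvs}(b) with a truncation of the large-$j$ tail, and part (c) via the stick-breaking/self-similarity recursion that builds GEM$(\theta)$ and then rearranges to PD$(\theta)$. The genuine gap is in the engine you use to produce the ratio asymptotic $h_{n-j}/h_n = (1-j/n)^{\theta-1}(1+o(1))$. First, the hypothesis of Proposition \ref{prop great bounds} --- that the saddle point approximation \eqref{asymptotic h} is valid --- fails in the Ewens regime. Here $G_h(z)=(1-z)^{-\theta}\Lambda(1/(1-z))$ has an algebraic singularity of \emph{fixed} exponent, which is the textbook situation where the saddle point (Hayman) formula breaks down: for $\theta_j\equiv\theta$ the formula predicts $h_n\sim n^{\theta-1}\,\e{\theta}\theta^{1/2-\theta}/\sqrt{2\pi}$ whereas the truth is $n^{\theta-1}/\Gamma(\theta)$; these agree only via Stirling as $\theta\to\infty$ (try $\theta=1$: $h_n=1$ but the formula gives $\e{1}/\sqrt{2\pi}\approx 1.08$). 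This is precisely why the paper abandons the saddle point method in Section \ref{sec regime D} and instead proves Proposition \ref{prop h_n} ($h_n=n^{\theta-1}\Lambda(n)/\Gamma(\theta)\,(1+o(1))$) via the Hardy--Littlewood--Karamata Tauberian theorem, followed by a recursion argument using \eqref{useful h_n} to strip off the Ces\`aro average, together with Lemma \ref{lem Lambda} to control the slowly varying factor $\Lambda$ (which need not converge when the $\theta_j$ merely tend to $\theta$ --- a point your argument ignores entirely).

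Second, even if you grant Proposition \ref{prop great bounds}, its two bounds do not pinch in the range you need. With $1-r_n\sim\theta/n$ the bounds read roughly $\tfrac{n}{n-j}\e{-\theta j/(n-j)}\lesssim h_{n-j}/h_n\lesssim \tfrac{n}{n-j}\e{-\theta j/n}$, which coincide only for $j=o(n)$; for $j=xn$ with $x\in(0,1)$ fixed they bracket but do not determine $(1-x)^{\theta-1}$. Parts (b) (for the tail you estimate) and especially (c) (where $L_1/n$ is macroscopic with probability one in the limit) require the ratio uniformly for $j\asymp n$, so the proposition cannot deliver the Riemann-sum limit $\int_s^1\theta(1-u)^{\theta-1}\,\dd u$ that your first stick-breaking step rests on. Once the ratio is obtained the correct way (via Proposition \ref{prop h_n} and Lemma \ref{lem Lambda}), your iteration for (c) is exactly the paper's: conditioning on $\tilde L_1,\dots,\tilde L_k$ reduces to $\bbP_{n-\ell_1-\dots-\ell_k}$ with the same weights, and the lower bound $n-\ell_1-\dots-\ell_k\geq n\prod_i(1-a_i)$ keeps the reduced system in the asymptotic regime, giving uniform convergence and hence joint convergence of the finite-dimensional distributions.
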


The last result involves only the limit $\theta$ and not the individual parameters $\theta_j$s. This is not surprising as the longest cycles become infinite as $n\to\infty$. The theorems of \cite{Han,FH} also concern cycles of diverging lengths and they should remain valid in the asymptotic Ewens case without modifications. On the other hand, the distribution of finite cycles depends explicitly on the $\theta_j$s.

The rest of this section is devoted to the proof of this theorem. It relies on estimates for the normalization $h_n$. Let us introduce the function $\Lambda(x)$, $x\geq1$, by
\be
\label{def Lambda}
\Lambda \Bigl( \frac1{1-s} \Bigr) = \exp \sum_{j\geq1} \frac{\theta_j - \theta}j s^j,
\ee
where $0 \leq s < 1$.

\begin{lemma}
\label{lem Lambda}
The function $\Lambda$ is ``slowly varying" in a strong sense. Namely, let $(x_n)$ and $(y_n)$ be any two diverging sequences such that there exists a constant $C>1$ with
\[
\frac1C \leq \frac{x_n}{y_n} \leq C
\]
for all $n$. Then
\[
\lim_{n\to\infty} \frac{\Lambda(x_n)}{\Lambda(y_n)} = 1.
\]
\end{lemma}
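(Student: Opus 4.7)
The plan is to reduce the claim to a standard $\epsilon$-$J$ argument on the power series defining $\log\Lambda$. Setting $s_n = 1 - 1/x_n$ and $t_n = 1 - 1/y_n$, I rewrite the defining relation \eqref{def Lambda} as
\[
\log \Lambda(x_n) - \log\Lambda(y_n) = \sum_{j\geq 1} \frac{\theta_j - \theta}{j} \bigl( s_n^j - t_n^j \bigr),
\]
and then show this tends to $0$ by splitting at some large index $J$.

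For any $\epsilon>0$, choose $J$ so that $|\theta_j - \theta| < \epsilon$ for all $j > J$; this is possible because $\theta_j \to \theta$. The head $\sum_{j\leq J}\frac{\theta_j-\theta}{j}(s_n^j - t_n^j)$ is a finite sum, and since $s_n, t_n \to 1$, each term vanishes as $n\to\infty$. For the tail, the mean value theorem gives $\frac{|s_n^j-t_n^j|}{j} \leq M_n^{j-1}\,|s_n - t_n|$, where $M_n = \max(s_n,t_n)$. Summing the geometric series,
\[
\sum_{j\geq 1} \frac{|s_n^j - t_n^j|}{j} \leq \frac{|s_n - t_n|}{1 - M_n}.
\]
Assuming without loss of generality $x_n \leq y_n$ (so $s_n \leq t_n$ and $M_n = t_n$), the right-hand side equals $(1/x_n - 1/y_n)/(1/y_n) = y_n/x_n - 1 \leq C - 1$. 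In the other case one gets $x_n/y_n - 1 \leq C - 1$. Either way the bound is uniformly $\leq C-1$, independently of $n$. Therefore the tail contribution is at most $\epsilon(C-1)$.

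Combining the two parts, $\limsup_n |\log\Lambda(x_n) - \log\Lambda(y_n)| \leq \epsilon(C-1)$, and since $\epsilon$ is arbitrary, the limit is $0$. Exponentiating gives the claim.

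The only mild obstacle is the tail estimate; the key observation is that the factor $1/j$ in the series cancels exactly with the factor $j$ coming from the mean value theorem, leaving a geometric sum whose ratio to $1-M_n$ is bounded purely in terms of $C$. Everything else is a routine $\epsilon$-$J$ split.
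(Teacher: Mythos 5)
Your proof is correct and follows essentially the same route as the paper: the same $\epsilon$--$J$ split, with the finite head handled identically. The only difference is in the tail estimate, where the paper evaluates $\sum_{j\geq1}\frac1j\bigl|s_n^j-t_n^j\bigr|$ exactly as $\bigl|\log(x_n/y_n)\bigr|\leq\log C$ via the logarithm series, whereas you bound it by $C-1$ using the mean value theorem and a geometric series; both bounds are uniform in $n$, so the argument goes through either way.
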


\begin{proof}
We need to show that
\be
\label{must be 0}
\sum_{j\geq1} \frac{\theta_j - \theta}j \Bigl[ \bigl( 1 - \tfrac1{x_n} \bigr)^j - \bigl( 1 - \tfrac1{y_n} \bigr)^j \Bigr]
\ee
converges to 0 as $n\to\infty$. Given $\varepsilon>0$, let $N_\varepsilon$ such that $|\theta_j - \theta| < \varepsilon$ for all $j>N_\varepsilon$. The sum over the first $N_\varepsilon$ terms of \eqref{must be 0} goes to 0 as $n\to\infty$. The rest is less than
\be
\varepsilon \sum_{j\geq1} \frac1j \Bigl| \bigl( 1 - \tfrac1{x_n} \bigr)^j - \bigl( 1 - \tfrac1{y_n} \bigr)^j \Bigr| = \varepsilon \Bigl| \log \frac{x_n}{y_n} \Bigr| \leq \varepsilon \log C.
\ee
The expression \eqref{must be 0} is then as small as we want when $n$ is large enough.
\end{proof}

Using the definition \eqref{def Lambda} and recognizing the Taylor series of the logarithm, we have
\be
\label{gen fct 2}
G_{h}(s) = (1-s)^{-\theta} \, \Lambda \Bigl( \frac1{1-s} \Bigr).
\ee

\begin{proposition}
\label{prop h_n}
\[
h_n = \frac{n^{\theta-1}}{\Gamma(\theta)} \, \Lambda(n) \, \bigl( 1 + o(1) \bigr).
\]
\end{proposition}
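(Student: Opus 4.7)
My plan is to extract $h_n=[s^n]G_h(s)$ by a singularity analysis at $s=1$, since by \eqref{gen fct 2},
\[
G_h(s) = (1-s)^{-\theta}\,\Lambda\!\bigl(\tfrac{1}{1-s}\bigr),
\]
so $G_h$ is holomorphic in the open unit disk with a single boundary singularity at $s=1$, where the dominant behaviour is the explicit algebraic factor $(1-s)^{-\theta}$ modulated by the slowly varying $\Lambda$. The expected main term then comes from the classical transfer $[s^n](1-s)^{-\theta}\sim n^{\theta-1}/\Gamma(\theta)$, multiplied by $\Lambda(n)$. I would avoid a naive Gaussian saddle point here: near a singularity of the form $(1-s)^{-\theta}$ the local profile is not Gaussian, so the Hankel contour is needed to pick up the correct constant $1/\Gamma(\theta)$.

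The first step is to start from $h_n=\tfrac{1}{2\pi\ii}\oint G_h(s)\,s^{-n-1}\,ds$ and deform the circle of integration to a Hankel-type contour $\gamma_n$ that grazes the singularity at $s=1$ at distance $\asymp 1/n$ and returns through the unit disk. On the inner portion of $\gamma_n$, where $|1-s|\leq R/n$ for a parameter $R$ to be chosen, I substitute $s=1-w/n$, so that
\[
(1-s)^{-\theta}=n^\theta w^{-\theta},\qquad s^{-n-1}=\e{w}\bigl(1+o(1)\bigr),\qquad \Lambda\!\bigl(\tfrac{1}{1-s}\bigr)=\Lambda\!\bigl(\tfrac{n}{w}\bigr),
\]
the second equality uniform for $|w|\leq R$. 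Applying Lemma~\ref{lem Lambda} to the pair $x_n=n/|w|$, $y_n=n$ (whose ratio lies in $[1/R,R]$) yields $\Lambda(n/w)=\Lambda(n)\bigl(1+o(1)\bigr)$ uniformly on the annulus $1/R\leq|w|\leq R$, and the inner integral factorises as $n^{\theta-1}\Lambda(n)$ times a truncated Hankel integral. Letting $R\to\infty$ after $n\to\infty$, the full Hankel integral $\tfrac{1}{2\pi\ii}\int_{\mathcal H}w^{-\theta}\e{w}\,dw=1/\Gamma(\theta)$ gives the claimed main term.

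Next I would control the complementary pieces of the contour. The tails of the Hankel portion ($|w|>R$) are $O(R^{-\theta})$ times the leading contribution because the factor $|\e{w}|=\e{\Re w}$ decays on those rays. The outer portion of $\gamma_n$, where $|1-s|\asymp 1$, is handled by combining the crude bound $|G_h(s)|\leq C|1-s|^{-\theta}\Lambda(|1-s|^{-1})$ with $|s|^{-n}$ being uniformly bounded away from the growth rate needed to compete with $n^{\theta-1}\Lambda(n)$, and yet another application of Lemma~\ref{lem Lambda} to compare $\Lambda$ at scale $|1-s|^{-1}$ with $\Lambda(n)$.

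The main obstacle is calibrating $R=R(n)$ so that Lemma~\ref{lem Lambda} applies with the required uniformity throughout the inner contour while simultaneously forcing the Hankel tails and the outer contribution to be $o(n^{\theta-1}\Lambda(n))$. The strong slow-variation statement of Lemma~\ref{lem Lambda}, namely uniformity over all bounded ratios $x_n/y_n\in[1/C,C]$, is precisely what permits $R=R(n)$ to diverge slowly; without it, one would be stuck with finite-$R$ estimates and no matching of the Hankel tail to $1/\Gamma(\theta)$. This uniform matching is the delicate technical step.
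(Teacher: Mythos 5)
Your route (Cauchy integral plus a Hankel-type contour at $s=1$) is genuinely different from the paper's, which applies the Hardy--Littlewood--Karamata Tauberian theorem to \eqref{gen fct 2} to obtain the Ces\`aro averages $\frac1n\sum_{j=0}^{n-1}h_j = \frac{n^{\theta-1}\Lambda(n)}{\Gamma(\theta+1)}(1+o(1))$ and then removes the averaging by splitting the recursion \eqref{useful h_n} as $h_n=\frac\theta n\sum_{j<n}h_j+\frac1n\sum_{j<n}(\theta_{n-j}-\theta)h_j$ and showing the second sum is negligible. The Tauberian theorem is used precisely because it needs only the behaviour of $G_h(s)$ as $s\to1^-$ along the \emph{real} axis, and that is all the hypothesis $\theta_n\to\theta$ delivers.

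This is where your argument has a genuine gap. The step $\Lambda(n/w)=\Lambda(n)(1+o(1))$ for $w$ in an annulus invokes Lemma~\ref{lem Lambda} at the complex argument $n/w$, but $\Lambda$ is defined by \eqref{def Lambda} only for real $x\geq1$ and the lemma is a statement about real diverging sequences; controlling $\bigl|\exp\sum_j\frac{\theta_j-\theta}{j}s^j\bigr|$ for complex $s$ near $1$ is a strictly stronger requirement that does not follow from $\theta_n\to\theta$ (the series $\sum\frac{\theta_j-\theta}{j}$ may diverge, and its partial sums may oscillate arbitrarily off the real axis). The same unjustified complex bound reappears in your estimate $|G_h(s)|\leq C|1-s|^{-\theta}\Lambda(|1-s|^{-1})$ on the outer arc. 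There is also a geometric obstruction: to recover $\frac1{2\pi\ii}\int_{\mathcal H}w^{-\theta}\e{w}\dd w=1/\Gamma(\theta)$ with $R\to\infty$ you need the branches of the $w$-contour to reach $\Re w\to-\infty$, i.e.\ the $s$-contour must pass to the right of $s=1$, outside the unit disk (a Flajolet--Odlyzko $\Delta$-domain). A contour confined to the closed unit disk, as you describe, keeps $\Re w\geq0$, where $\e{w}$ does not decay and the truncated integral does not converge to $1/\Gamma(\theta)$. Under the sole hypothesis $\theta_n\to\theta$ the function $G_h$ need not continue analytically past $|s|=1$ at all, so the required $\Delta$-analyticity would have to be added as an extra assumption --- which would make the proposition weaker than stated. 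To repair the proof along real-axis information only, you essentially have to fall back on a Tauberian argument, which is what the paper does.
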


\begin{proof}
The generating function of $h_n$ being given by \eqref{gen fct 2} with $\Lambda$ a slowly varying function, we can use the Tauberian theorem of Hardy-Littlewood-Karamata (see Theorem 9 of \cite{Bender}) to obtain
\be
\label{Tauberian result}
\frac1n \sum_{j=0}^{n-1} h_j = \frac{n^{\theta-1} \Lambda(n)}{\Gamma(\theta+1)} \, \bigl( 1 + o(1) \bigr).
\ee
We need to remove the Ces\`aro average in the left side. From \eqref{useful h_n}, we have
\be
h_n = \frac\theta n \sum_{j=0}^{n-1} h_j + \frac1n \sum_{j=0}^{n-1} (\theta_{n-j} - \theta) h_j.
\ee
The first term of the right side can be combined with \eqref{Tauberian result} and it gives the right result. We need to check that the correction due to the second term is irrelevant, i.e., we need to check that
\be
\lim_{n\to\infty} \frac1{n^\theta \Lambda(n)} \sum_{j=0}^{n-1} (\theta_{n-j} - \theta) h_j = 0.
\ee
Let $\varepsilon>0$. Using \eqref{Tauberian result}, we first have
\be
\frac1{n^\theta \Lambda(n)} \sum_{j=0}^{(1-\varepsilon) n} |\theta_{n-j} - \theta| h_j \leq \Bigl( \sup_{j \geq \varepsilon n} |\theta_j - \theta| \Bigr) \frac{((1-\varepsilon) n)^\theta \Lambda((1-\varepsilon) n)}{n^\theta \Lambda(n) \Gamma(\theta+1)} \, \bigl( 1 + o(1) \bigr),
\ee
which clearly vanishes in the limit $n\to\infty$. Second, let $C = \sup_j |\theta_j - \theta|$, and observe that
\be
\sum_{j = (1-\varepsilon) n}^{n-1} h_j = \sum_{j=0}^{n-1} h_j - \sum_{j=0}^{(1-\varepsilon) n} h_j = \frac{n^\theta \Lambda(n)}{\Gamma(\theta+1)} \Bigl[ 1 + o(1) - (1-\varepsilon)^\theta \frac{\Lambda((1-\varepsilon)n)}{\Lambda(n)} \, \bigl( 1 + o(1) \bigr) \Bigr].
\ee
Then
\be
\limsup_{n\to\infty} \frac1{n^\theta \Lambda(n)} \sum_{j = (1-\varepsilon) n}^{n-1} |\theta_{n-j} - \theta| h_j \leq \frac C{\Gamma(\theta+1)} \bigl[ 1 - (1-\varepsilon)^\theta \bigr],
\ee
which is arbitrarily small since $\varepsilon$ is arbitrary.
\end{proof}

We can now prove the theorem.

\begin{proof}[Proof of Theorem \ref{thm Ewens}]
The claim (a) easily follows from Lemma \ref{lem Lambda}, Proposition \ref{prop h_n}, and Corollary \ref{cor Poisson}.

For the claim (b), we first observe that the number of cycles of length larger than $\frac n{\sqrt{\log n}}$ is less than $\sqrt{\log n}$, so we only need to consider smaller cycles. This means that we can sum up to $\frac n{\sqrt{\log n}}$ in the expression of Proposition \ref{prop rvs} (b) for $\bbE_n(K)$.
By Proposition \ref{prop h_n}, the ratio $\frac{h_{n-j}}{h_n}$ converges to 1 as $n\to\infty$, uniformly in $1 \leq j \leq n/\sqrt{\log n}$. It follows that
\be
\lim_{n\to\infty} \frac{\bbE_n(K)}{\log n} = \lim_{n\to\infty} \frac1{\log n} \sum_{j=1}^{n / \sqrt{\log n}} \frac{\theta_j}j = \theta.
\ee

We turn to part (c).
Let $\tilde L_1, \tilde L_2,\dots$ denote the lengths of the cycles when they have been ordered e.g.\ according to their smallest element. That is, $\tilde L_1 = L_1$ is the length of the cycle that contains the index 1; $\tilde L_2$ is the length of the cycle that contains the smallest index that is not in the first cycle; and so on... We show that for all $k$,
\[
\Bigl( \frac{\tilde L_1}n, \frac{\tilde L_2}{n-\tilde L_1}, \frac{\tilde L_3}{n - \tilde L_1 - \tilde L_2}, \dots, \frac{\tilde L_k}{n - \tilde L_1 - \dots - \tilde L_{k-1}} \Bigr)
\]
converges to i.i.d.\ beta random variables with parameters $(1,\theta)$. This implies that $(\frac{\tilde L_1}n, \frac{\tilde L_2}n, \dots)$ converges weakly to GEM$(\theta)$; and reordering the cycle lengths in nonincreasing order yields PD$(\theta)$. It is enough to show that for any $k$ and any $a_1, \dots, a_k \in (0,1)$, we have
\be
\lim_{n\to\infty} \bbP_n \Bigl( \frac{\tilde L_1}n \leq a_1, \dots, \frac{\tilde L_k}{n - \tilde L_1 - \dots - \tilde L_{k-1}} \leq a_k \Bigr) = \prod_{i=1}^k \bigl[ 1 - (1-a_i)^\theta \bigr].
\ee
The right side is the beta measure of the product of intervals $\times_{i=1}^k (0,a_i)$.

We proceed by induction on $k$, starting with $k=1$. By Proposition \ref{prop rvs} (a), we have
\be
\bbP_n \Bigl( \frac{L_1}n \leq a_1 \Bigr) = \frac1n \sum_{j=0}^{a_1 n} \theta_j \frac{h_{n-j}}{h_n}
= \frac1n \sum_{j=0}^{a_1 n} \theta_j \bigl( 1 - \tfrac jn \bigr)^{\theta-1} \frac{\Lambda(n-j)}{\Lambda(n)} \, \bigl( 1 + o(1) \bigr).
\ee
We used Proposition \ref{prop h_n} to get the second identity.
By Lemma \ref{lem Lambda} the ratio $\frac{\Lambda(n-j)}{\Lambda(n)}$ converges to 1. We clearly have a Riemann sum, so that
\be
\lim_{n\to\infty} \bbP_n \Bigl( \frac{L_1}n \leq a_1 \Bigr) = \theta \int_0^{a_1} (1-x)^{\theta-1} \dd x = 1 - (1-a_1)^\theta.
\ee

Next, we assume that the claim has been proved for $k$ and we prove it for $k+1$. Let
\be
A = \Bigl\{ (\ell_1,\dots,\ell_k) \in \{1,\dots,n\}^k : \frac{\ell_1}n \leq a_1, \dots, \frac{\ell_k}{n - \ell_1 - \dots - \ell_{k-1}} \leq a_k \Bigr\}.
\ee
It is not hard to verify that, on $A$,
\be
\label{numbers diverge}
n - \ell_1 - \dots - \ell_k \geq n \prod_{i=1}^k (1-a_i).
\ee
We have
\bm
\label{conditional prob}
\bbP_n \Bigl( (\tilde L_1, \dots, \tilde L_k) \in A, \tfrac{\tilde L_{k+1}}{n - \tilde L_1 - \dots - \tilde L_k} \leq a_{k+1} \Bigr) \\
= \sum_{(\ell_1, \dots, \ell_k) \in A} \bbP_n( \tilde L_1 = \ell_1, \dots, \tilde L_k = \ell_k ) \,\, \bbP_n \Bigl( \tfrac{\tilde L_{k+1}}{n - \tilde L_1 - \dots - \tilde L_k} \leq a_{k+1} \Big| \tilde L_1 = \ell_1, \dots, \tilde L_k = \ell_k \Bigr).
\end{multline}
Now we use the self-similarity of weighted permutations: Having chosen the first $k$ cycles, the distribution of the $(k+1)$th cycle is identical but with less indices available. Precisely, we have
\be
\bbP_n \Bigl( \tfrac{\tilde L_{k+1}}{n - \tilde L_1 - \dots - \tilde L_k} \leq a_{k+1} \Big| \tilde L_1 = \ell_1, \dots, \tilde L_k = \ell_k \Bigr)
= \bbP_{n - \ell_1 - \dots - \ell_k} \Bigl( \tfrac{\tilde L_1}{n - \ell_1 - \dots - \ell_k} < a_{k+1} \Bigr).
\ee
(Notice that the $(k+1)$th cycle in the left side has become the 1st cycle in the right side.) The right side of the equation converges to the beta measure of $(0,a_{k+1})$. Convergence is uniform in $(\ell_1,\dots,\ell_k) \in A$ because of \eqref{numbers diverge}. The right side of \eqref{conditional prob} then converges to the beta measure of the product of intervals $\times_{i=1}^{k+1} (0,a_i)$ by the induction hypothesis.
\end{proof}

\section{Parameters with sub-exponential decay}
\label{sec regime E}

The second regime with long cycles occurs for parameters that go slowly to 0, such as $\theta_n = n^{-\gamma}$ with $\gamma>0$, or $\theta_n = \e{-n^\gamma}$ with $0<\gamma<1$. It is not hard to check that the assumptions of the theorem below are satisfied in both these cases. Notice that the results about the $R_{j}$s and about $K$ have already been proved in \cite{BG} in the case $\theta_{n} \sim n^{-\gamma}$.

\begin{theorem}
\label{thm small parameters}
Assume that $0< \frac{\theta_{n-j} \theta_j}{\theta_n} < c_j$ for all $n$ and all $1 \leq j \leq \frac n2$, with constants $c_j$ that satisfy $\sum_{j\geq1} \frac{c_j}j < \infty$. Assume also that $\frac{\theta_{n+1}}{\theta_n} \to 1$ as $n\to\infty$. Then $\sum_j h_j < \infty$, and
\[
\lim_{n\to\infty} \bbP_n (L_1 = n-m) = \frac{h_m}{\sum_{j\geq0} h_j}.
\]
In addition, $R_1, R_2, R_3, \dots$ converge weakly to independent Poisson random variables with respective means $\theta_1, \frac{\theta_2}2, \frac{\theta_3}3, \dots$, and $K-1$ converges to Poisson with mean $\sum_j \frac{\theta_j}j$.
\end{theorem}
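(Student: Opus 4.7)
The plan is to anchor everything on the sharp asymptotic
\be
\frac{n h_n}{\theta_n} \;\longrightarrow\; H \;:=\; \sum_{m \geq 0} h_m,
\ee
from which Proposition \ref{prop rvs}(a) together with $\theta_{n-m}/\theta_n \to 1$ (a consequence of $\theta_{n+1}/\theta_n \to 1$) immediately gives $\bbP_n(L_1 = n-m) = \theta_{n-m} h_m/(n h_n) \to h_m/H$. Summability of $h$ itself is immediate: freezing $j$ and letting $n \to \infty$ in $\theta_j \theta_{n-j}/\theta_n < c_j$ gives $\theta_j \leq c_j$ for every $j \geq 1$, so $I_{-1}(1) \leq \sum c_j/j < \infty$ and $H = G_h(1) = e^{I_{-1}(1)} < \infty$.

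The heart of the proof is the sharp ratio. I would write
\be
\frac{n h_n}{\theta_n} = \sum_{m=0}^{n-1}\frac{\theta_{n-m}}{\theta_n}\, h_m
\ee
and extract $\liminf \geq H$ from Fatou (each term is nonnegative and converges pointwise to $h_m$). For the matching upper bound I would split the sum at cutoffs $m \leq M$, $M < m \leq n/2$, and $m > n/2$. The first piece tends to $\sum_{m \leq M} h_m$, close to $H$ for $M$ large. The middle piece is controlled by the hypothesis in the form $(\theta_{n-m}/\theta_n) h_m \leq (c_m/\theta_m) h_m$ combined with the bootstrap bound $h_m \leq C\theta_m/m$, yielding a majorant $C\sum_{m > M}c_m/m$ that vanishes as $M \to \infty$. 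The third piece, after the change of variable $k = n-m$, becomes $\sum_{k \leq n/2}(\theta_k/\theta_n) h_{n-k}$, which the symmetric form of the hypothesis and the bootstrap majorize by $(2C/n)\sum_{k \leq n/2}c_k = o(1)$, using the Abel-summation fact $\sum_{k \leq N} c_k = o(N)$ (a consequence of $\sum c_k/k < \infty$). The bootstrap bound itself is proved by strong induction on \eqref{useful h_n}, closed by the same three-piece split, with the tail $\sum_{k > M}c_k/k$ made small to absorb the only potentially growing contribution.

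This domination step is the main technical obstacle: when $m$ is close to $n$ the ratio $\theta_{n-m}/\theta_n$ can be very large, and only the sharpened bootstrap bound $h_m = O(\theta_m/m)$ --- not the cruder $h_m = O(\theta_m)$ --- together with $\sum c_k/k < \infty$ produces a summable majorant. With the sharp ratio in hand the remaining claims are quick. The ratio gives $h_{n-1}/h_n \to 1$ (using also $\theta_{n-1}/\theta_n \to 1$), so Corollary \ref{cor Poisson} delivers the independent Poisson limits for the $R_j$ with means $\theta_j/j$. For $K - 1$ I would use self-similarity: conditional on $L_1 = n-m$, the complementary $m$-element permutation is $\bbP_m$-distributed, so $K - 1$ has the law of $K_m$ under $\bbP_m$. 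The generating-function identity $\bbE_m\bigl[z^{K_m}\bigr] = [w^m]\exp(zI_{-1}(w))/h_m$, summed against the limit distribution $h_m/H$, collapses to
\be
\sum_{m \geq 0}\frac{h_m}{H}\, \bbE_m\bigl[z^{K_m}\bigr] = \frac{\exp(zI_{-1}(1))}{H} = \exp\bigl((z-1) I_{-1}(1)\bigr),
\ee
the PGF of Poisson$(\sum_j \theta_j/j)$. The exchange of $\lim$ and $\sum_m$ is justified by dominated convergence with dominating sequence $h_m/H$, promoted to total-variation convergence of $\bbP_n(L_1 = n-\cdot) \to h_\cdot/H$ via Scheff\'e's lemma.
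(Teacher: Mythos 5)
Your proposal is correct, but it takes a genuinely different route from the paper on two of the three claims. For the $L_1$ statement and the asymptotic $h_n \sim H\theta_n/n$ (equivalently $nh_n/\theta_n \to \sum_j h_j$), the paper simply cites \cite{BUV} (Eq.\ (3.12) there) and does not reprove them; you instead rebuild the estimate from scratch via \eqref{useful h_n}, with Fatou for the lower bound and a three-piece split (fixed $m$, the range $M<m\leq n/2$ controlled by $\theta_{n-m}/\theta_n < c_m/\theta_m$ together with the inductive bound $h_m\leq C\theta_m/m$, and the range $m>n/2$ controlled by the symmetric form of the hypothesis plus Kronecker's lemma $\sum_{k\leq N}c_k = o(N)$) for the upper bound. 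This is exactly the kind of argument \cite{BUV} runs, and your version is sound; the only caveat is that the closing of the induction for $h_m\leq C\theta_m/m$ is sketched rather than executed, though the ingredients you name (choosing $M$ so the tail $\sum_{k>M}c_k/k$ is small enough to absorb the $C$-proportional contributions) do close it. For the Poisson limit of $K-1$ the two arguments genuinely diverge: the paper sandwiches $\bbP_n(K=k)$ between combinatorial events ($A\cap B\subset\{K=k\}$ with $A=\{R_1+\dots+R_m=k-1\}$ and $B$ the event of a unique long cycle), obtains a $\liminf$ bound, and upgrades it to equality by summing over $k$ and invoking Fatou; you instead condition on $L_1$, use self-similarity to identify the conditional law of $K-1$ with that of $K_m$ under $\bbP_m$, and collapse the mixture of probability generating functions via the identity $\sum_m h_m\,\bbE_m[z^{K_m}] = \exp(zI_{-1}(1))$, with Scheff\'e justifying the interchange. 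Your route is arguably cleaner and yields the full PGF in one stroke, at the cost of needing the total-variation convergence of $n-L_1$; the paper's route avoids generating functions entirely and needs only the pointwise convergence of $\bbP_n(L_1=n-m)$ and of the finite-cycle counts. Both are valid proofs of the theorem.
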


\begin{proof}
The claim about $L_1$ was proved in \cite{BUV}. The claim about the $R_j$s follows from Corollary \ref{cor Poisson} and from the fact that
\be
h_n = \frac{C \theta_n}n \bigl( 1 + o(1) \bigr)
\ee
with $C = \sum h_j$. This was proved in \cite{BUV}, see Eq.\ (3.12) there.

In order to prove that $K-1$ converges to a Poisson random variable, let $m,k$ be fixed. We consider the set of permutations
\be
A = \{ \sigma : R_1(\sigma) + \dots + R_m(\sigma) = k-1 \},
\ee
and $B$ the set of permutations where exactly one cycle has length larger than $m$. We have, for all $n>2m$,
\be
A \cap B \subset \{ \sigma: K(\sigma) = k \}, \qquad \{ \sigma:  L_1(\sigma) \geq n-m \} \subset B.
\ee
Then
\be
\bbP_n(K=k) \geq \bbP_n(A) - \bbP(\{ L_1 \geq n-m \}^{\rm c}) = \bbP_n(A) - 1 + \sum_{j=0}^m \bbP_n(L_1 = n-j).
\ee
We take the limit $n\to\infty$. Since $R_1+\dots+R_m$ converges to Poisson with mean $\sum_{j=1}^m \frac{\theta_j}j$, we get
\be
\liminf_{n\to\infty} \bbP_n(K=k) \geq \frac1{(k-1)!} \Bigl( \sum_{j=1}^m \frac{\theta_j}j \Bigr)^{k-1} \e{-\sum_{j=1}^m \frac{\theta_j}j} - 1 + \frac{\sum_{j=0}^m h_j}{\sum_{j\geq0} h_j}.
\ee
We now take the limit $m\to\infty$ and we get
\be
\liminf_{n\to\infty} \bbP_n(K=k) \geq \frac1{(k-1)!} \Bigl( \sum_{j\geq1} \frac{\theta_j}j \Bigr)^{k-1} \e{-\sum_{j\geq1} \frac{\theta_j}j}.
\ee
Summing over $k\geq1$, the left side is less or equal to 1 by Fatou's lemma; the right side yields 1. This shows that the inequality above is actually an identity, and $K-1$ is indeed Poisson in the limit $n\to\infty$.
\end{proof}

\section{Parameters with super-exponential decay}
\label{sec regime F}

We conclude our study of random permutations with cycle weights by discussing the case $\theta_n = \e{-n^\gamma}$ with $\gamma>1$. It was actually studied in \cite{BUV}, where the typical cycle length was proved to be a fractional power of $\log n$, namely
\be
\frac{L_1}{((\gamma-1) \log n )^{1/\gamma}} \Rightarrow 1.
\ee
We complement this result with a claim about the number of finite cycles. It is actually not very sharp, but it provides useful information nonetheless.

\begin{theorem}
As $n\to\infty$, we have
\[
\bbE_n(R_j) = \exp\Bigl\{ j \gamma \Bigl( \frac{\log n}{\gamma-1} \Bigr)^{\frac{\gamma-1}\gamma} + o \Bigl( (\log n)^{\tfrac{\gamma-1}\gamma} \Bigr) \Bigr\}.
\]
\end{theorem}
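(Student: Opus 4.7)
The starting point is Proposition~\ref{prop rvs}(c) applied with $k_j = 1$ and $k_i = 0$ for $i \neq j$, which gives
\be
\bbE_n(R_j) = \frac{\theta_j h_{n-j}}{j h_n} = \frac{\e{-j^\gamma}}{j}\cdot\frac{h_{n-j}}{h_n}.
\ee
For $j$ held fixed, the prefactor contributes only $O(1)$ to the logarithm and is absorbed into the error term $o((\log n)^{(\gamma-1)/\gamma})$, so the problem reduces to estimating the ratio $h_{n-j}/h_n$ via the saddle point machinery of Section~\ref{sec uniform SPE}, adapted to the present case in which the radius of convergence of $I_0$ is infinite.

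Let $r_n$ be the unique large solution of $I_0(r_n) = n$, where $I_0(r) = \sum_{k\geq 1}\e{-k^\gamma}r^k$. The terms of this sum are maximized at $k^\star(r) = ((\log r)/\gamma)^{1/(\gamma-1)}$ with peak value $\exp\{k^\star(r)\log r\cdot(\gamma-1)/\gamma\}$; a discrete Laplace estimate with polynomial width correction gives
\be
\log I_0(r) = \frac{\gamma-1}{\gamma^{\gamma/(\gamma-1)}}(\log r)^{\gamma/(\gamma-1)}\bigl(1 + o(1)\bigr)
\quad\text{as }r\to\infty.
\ee
Inverting the defining relation $I_0(r_n) = n$ yields
\be
\log r_n = \gamma\Bigl(\frac{\log n}{\gamma-1}\Bigr)^{(\gamma-1)/\gamma}\bigl(1 + o(1)\bigr),
\ee
consistent with the $L_1$-concentration result of \cite{BUV} since $k^\star(r_n) = ((\log n)/(\gamma-1))^{1/\gamma}(1+o(1))$.

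I would then invoke Proposition~\ref{prop great bounds}:
\be
\sqrt{\frac{I_1(r_n)}{I_1(r_{n-j})}}\,r_{n-j}^{\,j} \leq \frac{h_{n-j}}{h_n}\bigl(1+o(1)\bigr) \leq \sqrt{\frac{I_1(r_n)}{I_1(r_{n-j})}}\,r_n^{\,j}.
\ee
For fixed $j$, the ratio $I_1(r_n)/I_1(r_{n-j})$ tends to $1$ because $I_1(r) \sim k^\star(r)\,I_0(r)$ and both factors vary slowly on $[r_{n-j},r_n]$, while $j(\log r_n - \log r_{n-j}) = o(\log r_n)$ because $\log r_n$ is slowly varying in $n$. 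The two bounds therefore squeeze to give $\log(h_{n-j}/h_n) = j\log r_n + o(\log r_n)$, which yields the stated asymptotic.

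The main obstacle is justifying the saddle point framework in this super-exponential regime: because the radius of convergence is infinite and the Laplace peak $k^\star(r)$ drifts to infinity with $r$, both the Gaussian approximation at the peak and the decay of the tails of the sum $I_0(r)$ must be shown to remain uniformly valid as $r \to \infty$, and the polynomial width factors must be carefully tracked against the dominant exponential scale $(\log r)^{\gamma/(\gamma-1)}$. Once this technical point is addressed (in the style of the final section referenced in Section~\ref{sec uniform SPE}), the remaining steps — inverting $I_0(r_n) = n$, controlling $I_1(r_n)/I_1(r_{n-j})$, and combining with the squeezed bounds of Proposition~\ref{prop great bounds} — are routine.
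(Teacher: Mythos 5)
Your proposal follows essentially the same route as the paper: reduce $\bbE_n(R_j)$ to $\theta_j h_{n-j}/(j h_n)$ via Proposition~\ref{prop rvs}(c), squeeze $h_{n-j}/h_n$ with Proposition~\ref{prop great bounds}, and insert the asymptotics $\log r_n = \gamma\bigl(\tfrac{\log n}{\gamma-1}\bigr)^{(\gamma-1)/\gamma}(1+o(1))$, which the paper simply imports from \cite{BUV} (Eq.~(4.32) there) rather than rederiving by a Laplace estimate on $I_0$ as you do. The only substantive local difference is the control of $I_1(r_n)/I_1(r_{n-j})$: you argue it tends to $1$ via $I_1(r)\sim k^\star(r)\,I_0(r)$ (sharper, but requiring the Laplace analysis you flag as the main obstacle), whereas the paper sidesteps this with the elementary inequality $j\e{-j^\gamma}\leq\e{-(j-1)^\gamma}$, giving $1\leq I_1(r_n)/I_1(r_{n-j})\leq r_n\,(I_0(r_n)+1)/I_0(r_{n-j})$ — crude, but already absorbed by the $o\bigl((\log n)^{(\gamma-1)/\gamma}\bigr)$ error in the statement.
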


\begin{proof}
The radius of convergence of $G_\theta$ and $G_h$ is now infinite. Let $r_n$ satisfy $I_0(r_n) = n$. It was shown in \cite{BUV}, see Eq.\ (4.32) there, that
\be
\label{asymptotic r_n superdecay}
r_n = \exp\Bigl\{ \gamma \Bigl( \frac{\log n}{\gamma-1} \Bigr)^{\frac{\gamma-1}\gamma} (1+o(1)) \Bigr\}.
\ee
It follows that
\be
r_{n-j} = \exp\Bigl\{ \gamma \Bigl( \frac{\log n}{\gamma-1} \Bigr)^{\frac{\gamma-1}\gamma} \Bigl( 1 + \frac{\log( 1- \frac jn)}{\log n} \Bigr)^{\frac{\gamma-1}\gamma} (1+o(1)) \Bigr\}.
\ee
We can then express $r_{n-j}$ in term of $r_n$,
\be
r_{n-j} = r_n \e{o \bigl( (\log n)^{\frac{\gamma-1}\gamma} \bigr)},
\ee
where the precise meaning of $o(\cdot)$ is that for any $\varepsilon>0$, there exists $N$ such that
\be
\Bigl| \frac{o \bigl( (\log n)^{\frac{\gamma-1}\gamma} \bigr)}{(\log n)^{\frac{\gamma-1}\gamma}} \Bigr| < \varepsilon
\ee
for all $j,n$ such that $n>N$ and $n-j>N$.

Next, we observe that the parameters satisfy
\be
\e{-(j-1)^\gamma} = \e{-j^\gamma + \gamma j^{\gamma-1} + O(j^{\gamma-2})}
\ee
so that $j \e{-j^\gamma} \leq \e{-(j-1)^\gamma}$ for all $j$ large enough. It follows that if $r$ is large enough,
\be
I_1(r) = \sum_{j\geq1} j \e{-j^\gamma} r^j \leq \sum_{j\geq1} \e{-(j-1)^\gamma} r^j = r (I_0(r)+1).
\ee
Since $I_1(r)$ is increasing in $r$ and $r_n$ is increasing in $n$, we have for $n$ large enough,
\be
1 \leq \frac{I_1(r_n)}{I_1(r_{n-j})} \leq r_n \frac{I_0(r_n)+1}{I_0(r_{n-j})}.
\ee
Using Proposition \ref{prop great bounds} and Eq.\ \eqref{asymptotic r_n superdecay}, we have
\be
\frac{h_{n-j}}{h_n} = \exp\Bigl\{ j \gamma \Bigl( \frac{\log n}{\gamma-1} \Bigr)^{\frac{\gamma-1}\gamma} + o \Bigl( j (\log n)^{\frac{\gamma-1}\gamma} \Bigr) \Bigr\}.
\ee
A special case of Proposition \ref{prop rvs} (c) is $\bbE_n(R_j) = \frac{h_{n-j}}{h_n} \frac{\theta_j}j$. Combining this with the previous equation, and neglecting $\theta_j/j$ which is less than the error, we get the claim of the theorem for all $j$ finite.
\end{proof}

\section{Uniform saddle point estimates}
\label{sec uniform SPE}

Since the two cases of generating functions that we have considered are entirely similar, we initially restrict attention to the case of the generating function for the $h_n$ (as specified  in (\ref{Gh})).  $G_h(z) = \exp I_{-1}(z)$ is analytic in the unit disc and hence the normalization coefficients we want to study are naturally given by the Cauchy representation
\be
\label{Cauchyrep}
h_n = \frac{1}{2\pi \ii} \oint_{\mathcal{C}_n} G_{h}(z) \frac{\dd z}{z^{n+1}}
\ee
where $\mathcal{C}_n$ is the circle centered at $0$ of radius $r_n$.  With respect to polar coordinates along $\mathcal{C}_n$ this becomes
\be
\label{Cauchyrep2}
h_n = \frac{1}{2\pi} \int_{-\pi}^\pi \exp\left( I_{-1}\left(r_n \e{\ii \phi}\right) -n \log\left(r \e{\ii \phi}\right)\right) \dd\phi.
\ee
We observe that the function 
\be
\label{exponent}
F_n(z) = I_{-1}(z) - n \log(z)
\ee
which is positive and continuous on $(0,1)$ approaches $\infty$ at both endpoints and therefore attains a minimum value at $ r_n$. This point is unique and, as we have already observed, explicitly given as the critical point satisfying
\be
0 = F_n^\prime(z) = \frac{d}{dz} I_{-1}(z) - \frac{n}{z},
\ee
or equivalently
\be
r_n = I_0^{-1}(n).
\ee

We now consider a complex neighborhood (in $z$) of $r_n$. Since $F_n(z)$ is analytic in the unit disc minus the origin, one may assume that $F_n$ is analytic on the chosen neighborhood of $r_n$ and hence $r_n$ must be a saddle point of $F_n$ (by the maximum principle). The integral (\ref{Cauchyrep2}) is complex-valued, so it is natural to try to apply the method of steepest descent \cite{Miller} here. One may describe this approach in terms of a dynamical system; viz., the Cauchy-Riemann equations for the analytic function $F_n(z)$ may be viewed as a gradient dynamical system with potential $\Re F_n(z)$. The critical point $z = r_n$ is a fixed point of this system and  the locus $\Im F(n) = 0$ cuts out the stable and unstable manifolds of this fixed point. The real axis is the stable manifold in all the cases we consider. The steepest descent curves are the components of the unstable manifold; $F_n(z) = \Re F_n(z)$ decreases monotonically along these curves as one moves away from the fixed point $r_n$. The situation is illustrated in the left graphic of Figure \ref{fig:stpdes} below which depicts these stable and unstable manifolds for a particular case of algebraically growing parameters.  For background the reader is referred to section 2.3 of \cite{Miller}. We begin by Taylor expanding $F_n(z)$, as given by (\ref{exponent}), near $r_n$, and applying Taylor's form of the remainder theorem to derive the representation
\begin{align}
F_n(z) &= \left(I_{-1}(r_n) - n\log(r_n)\right) +  \frac{I_1(r_n)}{2 r_n^2} (z - r_n)^2 + \frac{I_2(\tilde{r_n}) - 3 I_1(\tilde{r_n}) + 2 I_0(\tilde{r_n}) - n}{6 \tilde{r_n}^3} (z - r_n)^3\nn\\
&= F_n(r_n) + A_n (z - r_n)^2 + B_n  (z - r_n)^3 (1 + o(1)).
\end{align}
where 
\be
\begin{split}
\tilde{r_n} &= r_n(1 + o(1))\\
A_n &= \frac{I_1(r_n)}{2 r_n^2}\\
B_n &= \frac{I_2({r_n}) - 3 I_1({r_n}) + n}{6 {r_n}^3}.
\end{split}
\ee
With $z = x + iy$, the local structure of the stable and unstable manifolds is given by the locus
\be
\begin{split} \label{Fn}
\Im F_n(z) &= 2 A_n (x - r_n) y + B_n \left(3 (x - r_n)^2 y - y^3\right)\\
&= y \left( 2 A_n (x - r_n)  + B_n \left(3 (x - r_n)^2  - y^2\right)\right)\\
&= 0.
\end{split}
\ee 
Indeed, $y = 0$ locally describes the stable manifold which we have already seen to be the $x$-axis while the remaining factor, which to leading orders has the form 
\begin{equation}\label{unsmfld}
y^2 = \frac{2A_n}{B_n}(x - r_n),
\end{equation}
locally describes a parabolic arc for the unstable manifold (steepest descent curves), consistent with the example shown in Figure \ref{fig:stpdes}.

One may similarly expand the real part of $F_n$ (first line below) and then restrict it to the unstable manifold (second line below),
\begin{align} 
\Re F_n(z) - F_n(r_n) &= A_n \left[(x - r_n)^2 - y^2 \right]+ B_n \left[(x - r_n)^3  - 3 (x - r_n) y^2\right] (1 + o(1))\nn\\
\label{finalform} &= \left( A_n (x - r_n)^2 + B_n (x - r_n)^3 \right) (1 + o(1))
\end{align}
where in the second line we have used (\ref{unsmfld}) and the fact, which will be seen below, that $B_n$ dominates $A_n$ as $n \to \infty$. 

We next apply these observations to the contour integral (\ref{Cauchyrep}).  By Cauchy's Theorem the contour of integration, $\mathcal{C}_n$ may be deformed within a region of analyticity without affecting the value of the integral. We will deform to a contour of the form $\mathcal{C} = D_+ +  C - D_-$ where $D_+$ is the  sub-locus of the steepest descent curve in the upper half plane starting at $r_n$ and terminating at a point $z_0$, inside the unit disc, to be determined. $D_-$ is the conjugate reflection of $D_+$ in the lower half plane. $C$ is the circular arc of radius $|z_0|$ starting at $z_0$ and terminating at $\bar{z_0}$. (Note that although $\mathcal{C}_n$ is not equal to the steepest descent path, it is tangent to that path at $r_n$.)   

We concentrate first on the integral over the steepest descent contours of $\mathcal{C}$. At the end of this section it will be shown that $z_0$ may be chosen, depending on $n$, so that that the quadratic term in (\ref{finalform}) goes to infinity with $n$ while the cubic term goes to zero. $z_0$ itself will tend to $1$ along with $r_n$ as $n\to \infty$. It is then natural to make the change of variables along $D_\pm$:
\be
\begin{split}
\frac{\sigma^2}2 &=  F(r_n) - F_n(z)\\
\label{CoV} &= - \frac{I_1(r_n)}{2 r_n^2} (x - r_n)^2 + o(1).
\end{split}
\ee
With this we have
\be
\begin{split}
\frac1{2\pi \ii} \int_{D_+ - D_-} \e{F_n(z)} \frac{\dd z}{z} &= \frac{\e{F_n(r_n)}}{2\pi \ii} \int_{\bar{z_0}}^{z_0} \e{F_n(z) - F_n(r_n)} \frac{\dd z}{z}\\
&=  \frac{\e{F_n(r_n)}}{\pi \ii} \int_{x_0 - r_n}^{0} \e{-\sigma^2/2} \frac{\dd x}{r_n + x}\\
&=  \frac{\e{F_n(r_n)}}{\pi \sqrt{I_1(r_n)}} \int_0^{\sigma_0} \e{-\sigma^2/2} \dd\sigma (1 + o(1))
\end{split}
\ee
where $\sigma_0 = \frac{\sqrt{I_1(r_n)}}{r_n} (x_0 - r_n)$. In the last line the change of variables (\ref{CoV}) was implemented. As already mentioned, at the end of this section it will be shown that a choice of $z_0$ can be made consistent with all prior estimates and for which $\sigma_0 \to \infty$ as $n \to \infty$. It follows that
\begin{eqnarray} \label{main}
\frac1{2\pi \ii} \int_{D_+ - D_-} \e{F_n(z)} \frac{\dd z}{z} &=& \frac{\e{F_n(r_n)}}{\sqrt{2 \pi I_1(r_n)}} (1 + o(1)). 
\end{eqnarray}

To complete the verification of (\ref{asymptotic h}), as well as the similar argument for  (\ref{general saddle asymptotic}), one still needs to argue that the global error coming from the integral (\ref{Cauchyrep2}),  restricted to $C$, is asymptotically negligible in comparison to (\ref{main}).  We illustrate the situation with two images from the case of algebraic growth (specifically, the instance of $G_h$ for (\ref{algparams}) where $\gamma = 1$ with $n = 100$). 

\begin{figure} [h]  
   \includegraphics[width=2.5in]{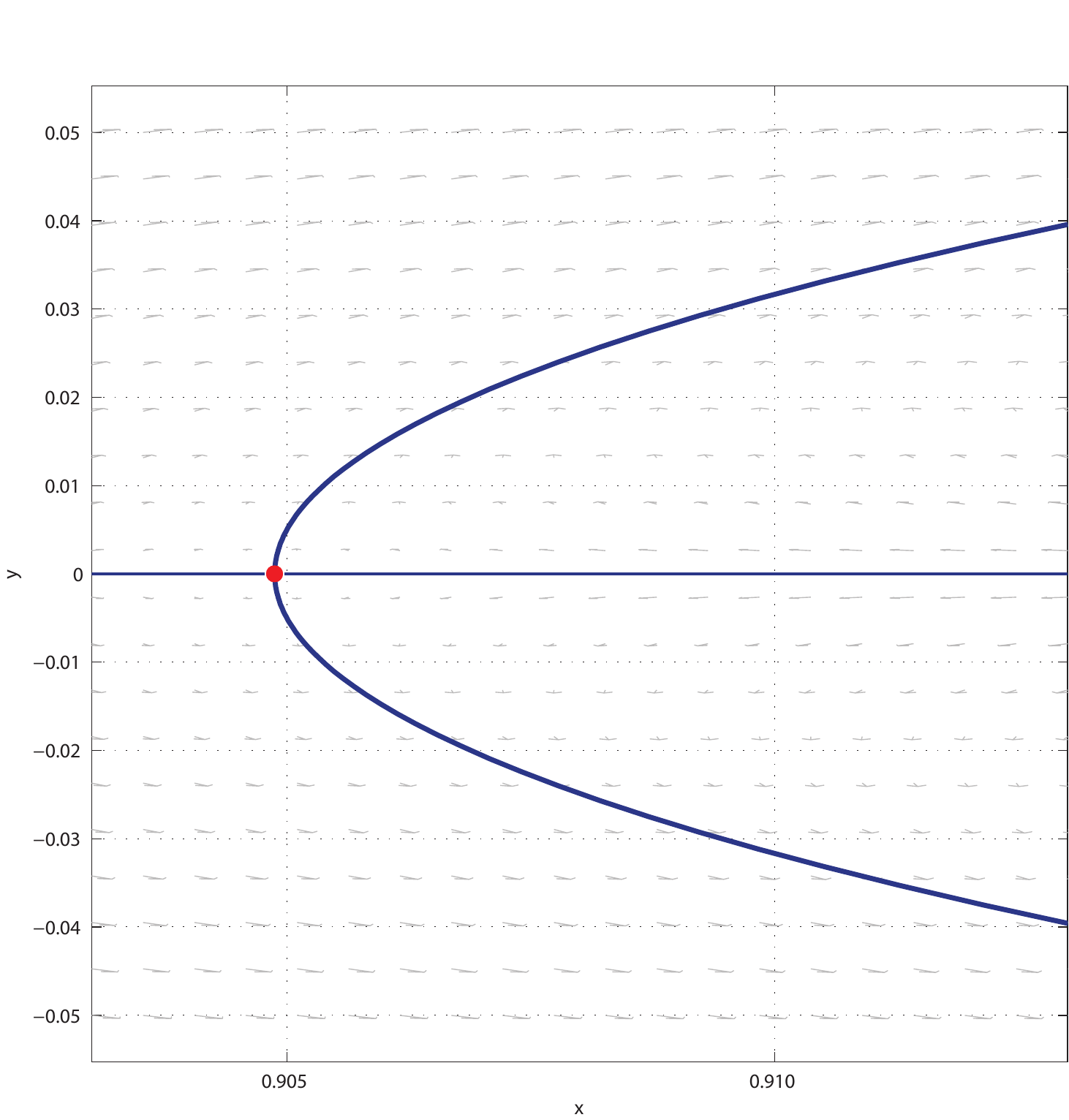} 
   \includegraphics[width=2.5in]{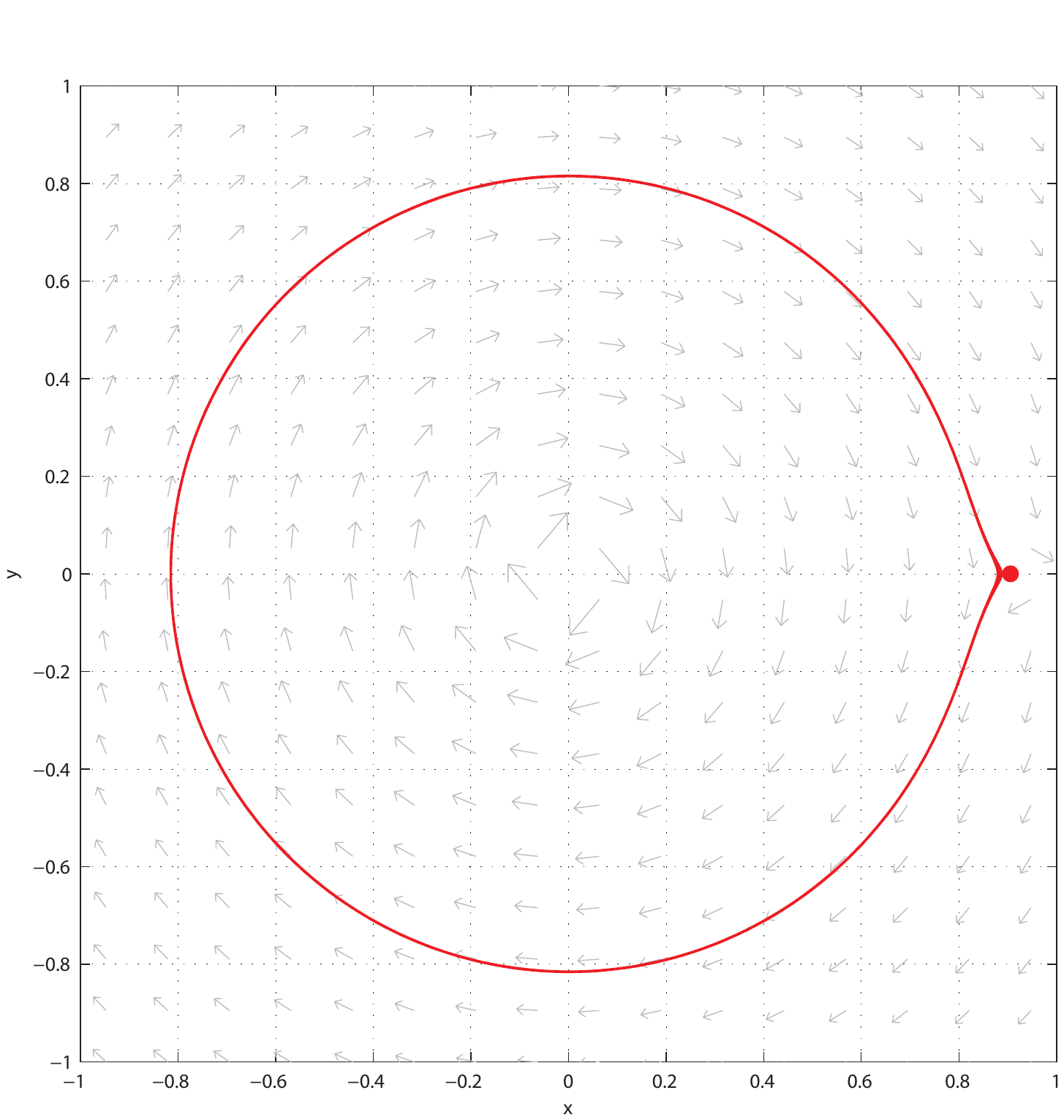}
   \caption{ saddle point and level curve at $r_n$}
   \label{fig:stpdes}
   \end{figure}

As has already been described, the graphic on the left in Figure \ref{fig:stpdes} shows the saddle point $r_n$ with paths on the real axis ascending from the saddle and the other two curves descending from the saddle ({\it steepest descent curves}) into the upper and lower half planes respectively. This illustrates the fact, stated before, that the contour $\mathcal{C}_n$ is tangent to the steepest descent curves. The graphic on the right shows the level curve, passing through $r_n$, of the real part of $F_n(z)$.  This level curve is also a locus where the magnitude of the integrand of (\ref{Cauchyrep2}) is constant. Note that this level curve is quite close to being circular away from a small neighborhood of $r_n$. This property is shared by other nearby level curves (for level values different than $F_n(r_n)$). This suggests that the order of the magnitude of the global error is bounded by the order of the absolute value of the integrand of (\ref{Cauchyrep2}) evaluated at $r_n \e{ i \phi_0}$. We will take a slightly different tack here which essentially accomplishes the same estimate but is easier to implement. Namely, we return to $C$ and observe that the value of $F(z)$ (which equals $\Re F(z)$ along the unstable manifolds) is decreasing along the unstable manifolds as one moves away from $r_n$. Hence, the  value of the integrand in (\ref{Cauchyrep}) at the respective endpoints $z_0, \bar{z_0} = |z_0|e^{i\phi_0}$ of $C$ is exponentially smaller (in $n$) than its value at $r_n$. We further observe that
\be
\begin{split}
\left| \frac{1}{2\pi \ii}\int_{C} \frac{\e{n  I_{-1}(z)}}{z^{n+1}} \dd z\right| 
& \leq  \frac{1}{\pi}\frac{1}{|z_0|^n}\int_{\phi_0}^\pi \e{n \Re I_{-1}(|z_0| \e{\ii\phi})} \dd\phi\\
&=  \frac{1}{\pi}\frac{1}{|z_0|^n} \e{n \Re I_{-1}(|z_0| \e{\ii\phi_0})} \int_{\phi_0}^\pi \e{n \Re \left[ I_{-1}(|z_0| \e{\ii\phi}) - 
I_{-1}|z_0| \e{\ii\phi_0})\right]} \dd\phi\\
& \leq \frac{1}{\pi}\frac{1}{|z_0|^n} \e{n \Re I_{-1}(|z_0| \e{\ii\phi_0})} (\pi - \phi_0),
\end{split}
\ee
where the last inequality follows from the fact that
\be
\Re \left[ I_{-1}(|z_0| \e{\ii\phi}) -  I_{-1}(|z_0| \e{\ii\phi_0})\right] = \sum_{j \geq 1} \frac{\theta_j}{j} |z_0|^j \left( \cos(j\phi) - \cos(j\phi_0)\right) \leq 0 
\ee
for $\phi \in (\phi_0, \pi)$. It follows that the integral over $C$ is exponentially negligible in comparison to (\ref{main}). 
\smallskip

Finally we return to the claim made just prior to (\ref{CoV}) that $z_0$ may be chosen so that, in (\ref{finalform}), the quadratic term grows to infinity with $n$ while the cubic term decreases. 
Given the analysis presented in sections 4 and 5 and in particular the estimates (\ref{the right choice}), (\ref{subexpscale}) and (\ref{algscale}, \ref{algI1}) it suffices to show that the order of 
$x_0 - r_n$ may be chosen so that
\begin{eqnarray*}
I_1(r_n) (x_0 - r_n)^2 &\to& \infty\\
I_2(r_n) (x_0 - r_n)^3 &\to& 0
\end{eqnarray*}
as $n \to \infty$.  
The following table summarizes the orders in $n$ of the relevant terms and presents a choice for the orders of  $(x_0 - r_n)$ for each of the cases of generating functions that we consider in this paper. In each case the choice is given in terms of a weighted geometric mean of the growth rates for $I_1(r_n)$ and $I_2(r_n)$. (Note that in the case of $G_\theta$, $r_n$ should be replaced by $\rho_n$.) 

{\small
\begin{table}[h]
\begin{center}
\begin{tabular}{cccccc}
&$\mathcal{O}( I_1(r_n))$ &$\mathcal{O}( I_2(r_n))$ & $ \mathcal{O}(x_0 - r_n)$ & $\mathcal{O}( I_1(r_n)(x_0 \! - \! r_n)^2)$ & $\mathcal{O}( I_2(r_n)(x_0 \! - \! r_n)^3)$\\
\hline 
$G_h(z)$, alg. & $n^{\frac{\gamma + 2}{\gamma + 1}}$ & $n^{\frac{\gamma + 3}{\gamma + 1}}$ &$n^{-\frac1{12}\frac{5\gamma + 12}{\gamma + 1}}$ & $n^{\frac16\frac{\gamma}{\gamma + 1}}$ & $n^{-\frac14\frac{\gamma}{\gamma + 1}}$\\
\hline
$G_\theta(z)$, sub-exp. & $n^{2-\gamma}$ & $n^{3 - 2\gamma}$ & $n^{- 1 + \frac7{12} \gamma}$ & $n^{\frac16 \gamma}$ & $n^{-\frac14 \gamma}$\\
\hline
$G_h(z)$, sub-exp. & $n (\log n)^{\frac1{\gamma}}$ & $n (\log n)^{\frac2 \gamma}$ & $n^{-\frac5{12}} (\log n)^{-\frac7{12} \frac1\gamma}$ & $\Bigl(\frac{n}{(\log n)^{1/\gamma}}\Bigr)^{\frac16}$ & $ \Bigl(\frac{(\log n)^{\frac1\gamma}}{n}\Bigr)^{\frac14}$\\
\hline
\end{tabular}
\end{center}
\end{table}
} 

\medskip

The arguments in this section follow the general strategy of {\it Hayman's method}. We refer the reader to Chapter VIII of \cite{FS} for a nice overview of this technique.

\medskip
\noindent {\footnotesize
{\bf Funding:} 
This work was supported by the National Science Foundation [DMS-0808059 to N.M.E.];  and the Engineering and Physical Sciences Research Council [EP/G056390/1 to D.U.].

\end{document}